\documentclass[a4paper, 12pt,oneside,reqno]{amsart}
\usepackage[a4paper]{geometry}
\usepackage[english]{babel}
\usepackage[T1]{fontenc}
\usepackage{amsfonts}
\usepackage{mathrsfs}
\usepackage{tikz}
\usetikzlibrary{arrows,shapes,snakes,automata,backgrounds,petri,through,positioning}
\usetikzlibrary{intersections}

\usepackage[matrix,arrow]{xy}
\usepackage{amssymb,amscd,amsthm,amsmath}
\usepackage[a4paper]{geometry}
\usepackage{amsmath}
\usepackage{amssymb}
\usepackage{amsthm}
\usepackage[colorlinks=true, allcolors=blue]{hyperref}

\newtheorem{theorem}{Theorem}[section]
\newtheorem{lemma}[theorem]{Lemma}

\newtheorem{proposition}[theorem]{Proposition}
\newtheorem{corollary}[theorem]{Corollary}
\theoremstyle{definition}
\newtheorem{definition}[theorem]{Definition}
\newtheorem{example}[theorem]{Example}

\renewcommand{\phi}{\varphi}

\title{On logarithmic Poisson cohomology of a degenerate Poisson bivector in affine plane.}
\author{Kamtila  Kari}
\address{ University of Maroua}
\email{kamtilakari@gmail.com}
\author{Iskamle Bruno}
\address{ University of Maroua}
\email{brunoiskamle@gmail.com}
\author{Diekouam Fotso L. E.}
\address{ University of Maroua}
\email{diekouamluc@gmail.com}
\author{Tcheka Calvin}
\address{ University of Dschang}
\email{jtcheka@yahoo.fr}
\thanks{Sincerely, the authors dedicate this paper to the memory of \textbf{Professor Joseph Dongho}, who passed aways on \textbf{26 February 2026} as this work was nearing completion. His intellectual rigour, generosity and lasting influence continue to inspire us.}   
\subjclass[2020]{Primary 14C20; 14B15; 17B56; \ Secondary 57T10; 57T25}
\keywords{Poisson structure, logarithmic Hamiltonian operator, Koszul bracket,  cochain complex, logarithmic Poisson cohomology.}
\begin{document}

\begin{abstract}
In this paper, we show that for a given degenerate bivector $\pi=y^n\partial_{x}\wedge \partial_{y}$ with $n>1$, the classical Poisson cohomology group and the logarithmic Poisson cohomology group along the ideal $\mathcal{I}=y^n\mathbb{F}[x,y]$ are isomorphics in every degree.  
This result follows from determination of the logarithmic Hamiltonian opreator and the logarithmic Poisson cochain complex  in order to compute the cohomological invariants associated to  $\pi$. $\mathbb{F}$ is the field of characteristic zero.
\end{abstract}
\maketitle
\section{Introduction}

A Poisson bivector on an $m$-dimensional manifold $M$ is any
$\pi\in\mathfrak{X}^2(M)=\Gamma(\wedge^2 TM)$  such that;
$[\pi,\pi]_{SN}=0$, where $[.,.]_{SN}$ is the Schouten-Nijenhuis bracket of
multivector fields. In algebraic point of view, since each Poisson
structure on $M$ induce a bi-derivation Lie bracket $\{f,
g\}=\pi(df\otimes dg)$ for all $f,g\in \mathcal{C}^\infty(M),$ a
Poisson algebra is any commutative and associative algebra $A$ over an
unitary ring $R$ endowed with a bi-derivation Lie bracket $\{.,.\}$. One of the first Poisson bracket on the algebra of smooth
functions on $\mathbb{R}^{2n}$ is defined by

\begin{equation}\label{E00}
 \{f,g\}=\underset{i=1}{\overset{n}\sum}(\dfrac{\partial f}{\partial p_i}\dfrac{\partial g}{\partial q^i}-\dfrac{\partial f}{\partial q^i}\dfrac{\partial g}{\partial p_i})
\end{equation}
     This Poisson structure play a fundamental role in the analytical mechanics.
  And the 
   Poisson cohomology together with Poisson manifolds was introduced by A. Lichnerowicz (see \cite{AL}). It also plays a fundamental role in the deformation of Poisson structures, in the deformation quantization(see \cite{FB}) and in geometric quantization(see \cite{DJ1}). \\
   
   Some studied cases of classical Poisson cohomology that we can list include for example, those of C. Roger and P. Vanhaecke (in \cite{CRPV}), A. Pichereau (see \cite{PA}), P. Monnier (see \cite{PM}), 	N. Nakanishi (see \cite{NN}) and B. Iskamle, J. Dongho, B. Ndombol(see \cite{BI}). We cannot mention them without citing J. Dongho (see \cite{DJ1}), who is the initiator of logarithmic Poisson cohomology along certain divisors so-called principals, among which divisor $\{y^n=0, n>1\}$ does not appear. This is why we think it's necessary to investigate this case.\\
   
Doing so is substantially important beacuse there are very few explicit examples of logarithmic Poisson cohomology calculations, so it may be interest readers.
The motivation for this case is to compare the cohomological invariants with Bruno Iskamle's case(see \cite{BI})   on the classical Poisson cohomology of the Poisson bracket $\{x,y\}_{0}=y^n, n\geq 2$. The case where Poisson cochain complex was predefined by Claude Roger and Pol Vanhaecke(see \cite{CRPV}), it is not for the logarithmic Poisson cochain complex. \\

Our main results on the logarithmic Poisson cohomology assiciated to the Poisson bivector $\pi = y^n\partial x \wedge \partial y$ where $n>1$ are:
\begin{enumerate}
\item[$\bullet$] The degree 0 is $H_{log}^{0}(\tilde{\mathcal{P}}) \simeq \mathbb{F}$,
\item[$\bullet$] The degree 1 is $H_{log}^{1}(\tilde{\mathcal{P}}) \simeq  \mu (\overset{n-2}{\underset{i=0}{\bigoplus}}y^{i} \mathbb{F}[x])\oplus ( \mathbb{F}_{n-1}[y]\times 0)$, where $\mu$ is the  $\mathbb{F}$-linear map given by  $\mu : \mathcal{A} \longrightarrow \mathcal{A}\times \mathcal{A}; b\mapsto \left(\int ((n-1)b - y\partial_{y}b)dx; b\right)$,
\item[$\bullet$] The degree 2 is $H_{log}^{2}(\tilde{\mathcal{P}}) \simeq \overset{n-2}{\underset{i=0}{\bigoplus}}y^{i} \mathbb{F}[x]$,
\item[$\bullet$] The degree $k>2$ is $H_{log}^{k}(\tilde{\mathcal{P}}) \simeq 0$, for all $k>2$. 
\end{enumerate}
We deduce according to \cite{BI}'s results and those of this work that, for the above degenerate bivector $\pi$, it follows that there exists an isomorphism $\Pi^{*}: H_{Pois}^{*}(\mathcal{P}) \longrightarrow  H_{log}^{*}(\tilde{\mathcal{P}})$ where $H_{Pois}^{k}(\mathcal{P})$ denote  the classical Poisson cohomology of degree $k$. It is not always the case for any Poisson bivector $\pi = \{x,y\}\partial _{x}\wedge \partial _{y}$.\\

 The paper is stuctured as follows:
 After the introduction which establishes the context, the second step  concerns the preliminaries on some properties of the Poisson structure, the Koszul bracket and the cochain complex,  
the third movement  is devoted to determination of the logarithmic cochain complex,
 and we completes in fourth part with  the computation of the logarithmic Poisson
  cohomology of  $ (\mathbb{F}[x,y],\{x,y\}=y^n, \tilde{H})$, $n>1$ along the ideal $\mathcal{I}=y^n\mathbb{F}[x,y]$ where $\tilde{H}$ is the associated logarithmic Hamiltonian operator.  
 
\section{Preliminaries}\label{Rapp}
\begin{definition} \cite{CL}
A Poisson algebra is an $\mathbb{F}$-vector space $\mathcal{A}$ equipped with two operations $(f,g)\mapsto f.g$ and $(f,g)\mapsto \{f,g\}$ such that
  \begin{enumerate}
  \item[i.] $(\mathcal{A},.)$ is a commutative associative algebra over $\mathbb{F}$ with unit 1;
  \item[ii.] $(\mathcal{A},\{.,.\})$ is a Lie algebra of $\mathbb{F}$;
    \item[iii.] The two operations are compatibles in the sence that for all $a,b,c\in \mathcal{A}$,
    \begin{equation}\label{R2}
\{a.b, c\}=a.\{b,c\}+b.\{a,c\}.
    \end{equation}
 \end{enumerate}
\end{definition}
\begin{example} Let  $\mathcal{A} = \mathbb{F}[x,y]$  and
$\{f, g\}_{0} =  \{x,y\}_{0}  \left(\dfrac{\partial f}{\partial x } \dfrac{\partial g}{\partial y } - \dfrac{\partial f}{\partial y } \dfrac{\partial g}{\partial x }\right)$ for all $f,g\in \mathcal{A}$. So $(\mathcal{A}, \{.,.\}_{0})$ is the Poisson algebra. 
\end{example}
 According to the Leibniz rule given in $(\ref{R2})$  we deduce that for all $a\in \mathcal{A}$, the map  $ad_{a} :\mathcal{A}\rightarrow \mathcal{A}$, $b\longmapsto \{a,b\}$ is a derivation on  $\mathcal{A}$. Moreover for all  $a,b\in \mathcal{A}$ we have:
 \begin{eqnarray*}
 ad_{a.b}(x) = a.\{b,x\}+ b.\{a,x\}=a. ad_{b}(x)+b. ad_{a}(x),
 \end{eqnarray*}
   The map $ad :\mathcal{A}\rightarrow Der_{\mathcal{A}},a\longmapsto \{a,.\} $ is a derivation of $\mathcal{A}$  with values in the  $\mathcal{A}$-module $Der_{\mathcal{A}}$. According to the universal property of $(\Omega_{\mathcal{A}}, d)$, the derivation $ad:\mathcal{A} \rightarrow Der_{\mathcal{A}}$ induce an homomorphism of $\mathcal{A}-$modules called Hamiltonian map defined as follows
 \begin{eqnarray}
 H: \Omega_{\mathcal{A}} \rightarrow Der_{\mathcal{A}}, dx\longmapsto H(dx)=\{x,.\}
 \end{eqnarray} such that the following diagram commute
 \begin{eqnarray}
 \xymatrix{\mathcal{A}\ar[r]^d\ar[dr]_{ad}&\Omega_{\mathcal{A}}\ar[d]^H\\
                                   &             Der_{\mathcal{A}}}
 \end{eqnarray}

\begin{example} For $\{x, y\} = y^n$ and $ \mathcal{A}= \mathbb{F}[x,y]$, the Hamiltonian operator is $H: \Omega _{\mathcal{A}} \longrightarrow Der _{\mathcal{A}}$ such that  $df\mapsto H_{f} = y^n (\partial_{x}f\partial_{y}  - \partial_{y}f \partial_{x})\in Der _{\mathcal{A}}$. 
\end{example}
Let  $\{h=0\}$ the divisor on $\mathbb{F}^{p}$ with coordinate system $x_{1},...,x_{p}$ and ideal $\mathcal{I} = h\mathcal{A}$.
 \begin{definition} \cite{DJ1}
A derivation $D$ of $\mathcal{A}$ is said to be  logarithmic along $\mathcal{I}$ if we have $D(\mathcal{I}) \subset \mathcal{I}$. Denote by $Der _{\mathcal{A}}(log \mathcal{I})$ the set of all  derivation logarithmics along $\mathcal{I}$ and  we denote by $\Omega^{1} _{\mathcal{A}}(log \mathcal{I})$ it dual, which is the  $\mathcal{A}$-module of  k\"{a}hler differential  1-form, logarithmic along $\mathcal{I}$.
\end{definition}

\begin{definition}
Let $h=0$ the hyperplan equation on affine plane $\mathbb{F}^{2}$ and $ \mathcal{A}= \mathbb{F}[x,y]$.
If the divisor $\{h=0\}$ is logarithmic along the ideal $\mathcal{I}$, the Poisson structure $\{x,y\} = h$ is said logarithmic along $\mathcal{I} =  h\mathcal{A}$.
\end{definition}

\begin{proposition} \cite{DJ1} $\label{P1}$
Each logarithmic Poisson structure along 
 $\mathcal{I}$ induce  an $\mathcal{A}$-module homomorphism $\tilde{H}$ from $\Omega ^{1}_{\mathcal{A}}(log \mathcal{I})$ to $Der _{\mathcal{A}}(log \mathcal{I})$ by $\tilde{H}(\dfrac{du}{u})=\dfrac{1}{u}H(du)$.
\end{proposition}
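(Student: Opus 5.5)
We must show that $\tilde{H}$, defined on generators by $\tilde{H}(\frac{du}{u})=\frac{1}{u}H(du)$, extends to a well-defined $\mathcal{A}$-linear map $\Omega^1_{\mathcal{A}}(\log\mathcal{I})\to Der_{\mathcal{A}}(\log\mathcal{I})$. The plan has three parts: describe $\Omega^1_{\mathcal{A}}(\log\mathcal{I})$ concretely, define $\tilde{H}$ on it and check it is well defined and $\mathcal{A}$-linear, and check that the image consists of logarithmic derivations.

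\textbf{Description of the module.} Since $\Omega^1_{\mathcal{A}}(\log\mathcal{I})$ is defined as the dual of $Der_{\mathcal{A}}(\log\mathcal{I})$, I would first identify it with the module of rational $1$-forms $\omega$ such that $h\omega$ and $h\,d\omega$ are regular. This is Saito's description, which is available in the plane since every reduced or principal divisor there gives a free module of logarithmic derivations. Thus $\Omega^1_{\mathcal{A}}(\log\mathcal{I})$ is generated over $\mathcal{A}$ by $\Omega^1_{\mathcal{A}}$ together with $\frac{dh}{h}$, and more generally by forms $\frac{du}{u}$ with $u$ a factor of $h$. For $h=y^n$ this gives $\frac{dh}{h}=n\frac{dy}{y}$.

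\textbf{Definition and well-definedness.} Extend $H$ to rational forms by $\mathcal{A}$-linearity, i.e.\ to $\mathcal{A}[h^{-1}]\otimes\Omega^1_{\mathcal{A}}$ with values in derivations with coefficients in $\mathcal{A}[h^{-1}]$, and set $\tilde{H}:=H|_{\Omega^1_{\mathcal{A}}(\log\mathcal{I})}$. Then $\tilde{H}(\frac{du}{u})=\frac{1}{u}H(du)$ holds automatically, and $\tilde{H}$ is well defined and $\mathcal{A}$-linear because it is the restriction of a linear map. This avoids having to check relations among the generators.

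\textbf{Image is regular and logarithmic.} I expect this to be the main obstacle.
\begin{enumerate}
\item[(a)] \emph{Regularity.} The Poisson structure is logarithmic, meaning $\{h,\mathcal{A}\}\subset h\mathcal{A}$. So $H(du)$ for $u\mid h$ is divisible by $u$; in particular $H(dh)(f)=\{h,f\}\in h\mathcal{A}$. Hence $\frac{1}{h}H(dh)$ has polynomial coefficients.
\item[(b)] \emph{Generators.} For a general log form $\omega$, write $h\omega=g\,dh+h\eta$, a standard consequence of $h\,d\omega$ being regular. Then $H(\omega)=\frac{g}{h}H(dh)+H(\eta)$, which is regular by (a).
\item[(c)] \emph{Logarithmicity.} We need $\tilde{H}(\omega)(h\mathcal{A})\subset h\mathcal{A}$. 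Compute $\tilde{H}(\omega)(hf)=f\,\tilde{H}(\omega)(h)+h\,\tilde{H}(\omega)(f)$, so it suffices that $\tilde{H}(\omega)(h)\in h\mathcal{A}$. For $\omega=\frac{dh}{h}$ this quantity is $\frac{1}{h}\{h,h\}=0$. For $\omega=dg$ it is $\{g,h\}\in h\mathcal{A}$ by the hypothesis.
\end{enumerate}

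\textbf{Concrete check.} For $h=y^n$ and $\{x,y\}=y^n$ everything is explicit. We have $\tilde{H}(\frac{dy}{y})=y^{n-1}(-\partial_x)$, up to the sign convention $H_f=y^n(\partial_xf\,\partial_y-\partial_yf\,\partial_x)$, and this derivation visibly preserves $y^n\mathcal{A}$. Also $\tilde{H}(dx)=y^n\partial_y$, and $y^n\partial_y(y^nf)\in y^n\mathcal{A}$. This confirms the general argument in the case used later in the paper.
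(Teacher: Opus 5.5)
Your definition of $\tilde{H}$ as the restriction of the $\mathcal{A}[h^{-1}]$-linear extension of $H$ is fine. Your concrete check for $h=y^n$ is also correct: there $\Omega^1_{\mathcal{A}}(\log\mathcal{I})$ is free on $dx$ and $\frac{dy}{y}$. That case is all the paper uses; the paper gives no proof of this proposition beyond citing \cite{DJ1}, and its next proposition just evaluates $\tilde{H}$ on $dx$ and $\frac{dy}{y}$ and extends linearly.

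The gap is in your general argument for regularity and logarithmicity, which rests on a wrong description of $\Omega^1_{\mathcal{A}}(\log\mathcal{I})$.

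\emph{Saito's description needs the reduced equation.} Take $h=y^n$ with $n\geq 2$ and $\omega=y^{1-n}dx$. Then $h\omega$ and $h\,d\omega$ are regular. Yet $\langle y^{1-n}dx,\partial_x\rangle=y^{1-n}\notin\mathcal{A}$, although $\partial_x\in Der_{\mathcal{A}}(\log\mathcal{I})$. So your module is not the dual of $Der_{\mathcal{A}}(\log\mathcal{I})$.

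\emph{Step (b) is false, and this is decisive.} Saito's lemma only gives $g\omega=\xi\frac{dh}{h}+\eta$ for some $g$ not vanishing identically on any component of $\{h=0\}$. It does not give $h\omega=g\,dh+h\eta$. Equivalently, $\Omega^1_{\mathcal{A}}(\log\mathcal{I})$ is in general strictly larger than $\Omega^1_{\mathcal{A}}+\sum_{u\mid h}\mathcal{A}\frac{du}{u}$.

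A counterexample is the cusp $h=y^2-x^3$. Here $Der_{\mathcal{A}}(\log\mathcal{I})$ has basis $E=2x\partial_x+3y\partial_y$, $D=2y\partial_x+3x^2\partial_y$, with determinant $-6h$. The dual-basis form $\omega=\frac{3y\,dx-2x\,dy}{6h}$ lies in $\Omega^1_{\mathcal{A}}(\log\mathcal{I})$. But an identity $3y\,dx-2x\,dy=6g\,dh+6h\eta$ is impossible: comparing $dy$-coefficients modulo $(x,y)^2$ gives $-2x\equiv 12g(0)y$.

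So (b) and (c) verify the claims only on a submodule, and the proposition is not proved for arbitrary $h$. Separately, (a) asserts without argument that $u$ divides $\{u,f\}$ for factors $u$ of $h$. This is true in characteristic zero by comparing poles, but it needs saying.

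The missing idea is to use duality instead of any description of the module.

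\emph{Regularity.} For rational $\omega$ one has $H(\omega)(f)=-\langle\omega,H(df)\rangle$. Logarithmicity of the bracket gives $H(df)(hg)=g\{f,h\}+h\{f,g\}\in h\mathcal{A}$. Hence $H(df)\in Der_{\mathcal{A}}(\log\mathcal{I})$, and so $\langle\omega,H(df)\rangle\in\mathcal{A}$ for every $\omega$ in the dual $\Omega^1_{\mathcal{A}}(\log\mathcal{I})$. Thus $\tilde{H}(\omega)$ is a regular derivation, and it is $\mathcal{A}$-linear in $\omega$.

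\emph{Logarithmicity.} We have $H(dh)=hD$ with $D=\frac{1}{h}\{h,\cdot\}$. This $D$ is a derivation with $D(h)=0$, so $D\in Der_{\mathcal{A}}(\log\mathcal{I})$. Then $\tilde{H}(\omega)(h)=-h\langle\omega,D\rangle\in h\mathcal{A}$, and the Leibniz rule gives $\tilde{H}(\omega)(\mathcal{I})\subset\mathcal{I}$.

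\emph{The formula.} For $\omega=\frac{du}{u}$ the definition yields $-\frac{\{f,u\}}{u}=\frac{1}{u}H(du)(f)$, as required.

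This argument works for any $h$ in any number of variables and needs neither Saito's criterion nor freeness.
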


\begin{theorem}  \cite{DJ1} $\label{T2}$
Let $\{.,.\}$ be a logarithmic Poisson structure along 
 $\mathcal{I}$.
 The map $[.,.]$ given by $[.,.]: (\omega_{i},\omega_{j})\mapsto \mathcal{L}_{\tilde{H}(\omega_{i})}(\omega_{j}) + \mathcal{L}_{\tilde{H}(\omega_{j})}(\omega_{i}) - d \pi (\omega_{i},\omega_{j})$ is a Lie structure on $\Omega^{1} _{\mathcal{A}}(log \mathcal{I})$. $\pi$ is a $2$-form (or the Poisson bivector) associated to  $\{.,.\}$.
 
\end{theorem}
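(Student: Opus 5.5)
The plan is to verify directly the axioms of a Lie bracket for $[\omega_i,\omega_j]=\mathcal{L}_{\tilde H(\omega_i)}\omega_j-\mathcal{L}_{\tilde H(\omega_j)}\omega_i-d\pi(\omega_i,\omega_j)$ on $\Omega^1_{\mathcal{A}}(\log\mathcal{I})$. I read the bracket with the usual Koszul sign convention, taking the middle sign as a minus so that the expression is antisymmetric; with that reading, antisymmetry is immediate. Here $\pi(\omega_i,\omega_j)=\langle \tilde H(\omega_i),\omega_j\rangle$, and this pairing is antisymmetric because $\pi$ is a bivector.

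\emph{Closure.} First I check that the bracket lands in $\Omega^1_{\mathcal{A}}(\log\mathcal{I})$. By Proposition \ref{P1}, $\tilde H(\omega_i)\in Der_{\mathcal{A}}(\log\mathcal{I})$. The Lie derivative along a logarithmic derivation preserves logarithmic forms, since $\mathcal{L}_D(du/u)=d(D(u)/u)$ and $D(u)/u\in\mathcal{A}$ when $D(\mathcal{I})\subset\mathcal{I}$. Finally, $d$ of the function $\pi(\omega_i,\omega_j)$ is an ordinary exact form, hence logarithmic.

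\emph{Reduction to generators.} Next I establish the Leibniz-type rule
\[
[\omega_i,f\omega_j]=f[\omega_i,\omega_j]+\tilde H(\omega_i)(f)\,\omega_j,
\]
which follows from Cartan's formula $\mathcal{L}_X=i_Xd+di_X$. Its Jacobiator is therefore $\mathcal{A}$-trilinear, or at least tensorial up to terms that cancel, exactly as in the classical Koszul-bracket argument. So it suffices to check Jacobi on generators of the form $du/u$ with $u$ ranging over coordinates and the defining function $h$. Over the fraction field, $\Omega^1_{\mathcal{A}}(\log\mathcal{I})\subset\Omega^1\otimes\mathrm{Frac}(\mathcal{A})$ and $\tilde H$ is the restriction of the $\mathrm{Frac}(\mathcal{A})$-linear extension of $H$. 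The formula $[du/u,dv/v]=d(\{u,v\}/(uv))$ then shows that on logarithmic exact forms the bracket is $d$ of the Poisson bracket of logarithms. Jacobi then reduces to the Jacobi identity of $\{.,.\}$, which follows from $[\pi,\pi]_{SN}=0$.

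\emph{Main obstacle.} The delicate step is making the localization argument rigorous. I would embed everything in the Koszul bracket on $\Omega^1_{K}$, where $K=\mathrm{Frac}(\mathcal{A})$, and observe that the classical theorem that the Koszul bracket of a Poisson bivector is a Lie bracket holds over $K$. The logarithmic bracket is then the restriction of this bracket, and by the closure step the subspace is stable, so the Lie axioms are inherited. Alternatively, since the paper only needs the case $\mathcal{A}=\mathbb{F}[x,y]$ and $\pi=y^n\partial_x\wedge\partial_y$, I could check Jacobi directly on the generators $dx$ and $dy/y$, or $dy/y^n$, as a sanity check.
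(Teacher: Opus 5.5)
The paper does not prove this theorem; it quotes it from \cite{DJ1}. So there is no argument of the authors to compare with, and your localization argument is an independent, self-contained proof. Its core is sound:
\begin{itemize}
\item extend $\{.,.\}$ to $K=\mathrm{Frac}(\mathcal{A})$ by the quotient rule; it stays Poisson, since the Jacobiator of a skew biderivation is a derivation in each slot and vanishes on the $x_i$;
\item use the standard fact that the Koszul bracket on $\Omega^1_K=K\otimes_{\mathcal{A}}\Omega^1_{\mathcal{A}}$ is a Lie bracket;
\item identify $\tilde H$ with the restriction of $H_K$: for logarithmic $\omega$ the form $h\omega$ is regular, so $h\tilde H(\omega)=H(h\omega)=hH_K(\omega)$;
\item let the Lie axioms descend to a bracket-stable $\mathbb{F}$-subspace.
\end{itemize}
Reading the middle sign as a minus is in fact forced. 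With the printed $+$ one gets $[\omega,\omega]=2\,i_{\tilde H(\omega)}d\omega$, e.g.\ $[y\,dx,y\,dx]=2y^{n+1}dx\neq 0$ for $\pi=y^n\partial_x\wedge\partial_y$. Item d of Corollary \ref{Cor1} would also become $[du,dv]=-d\{u,v\}$.

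Two steps need repair.

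\emph{Closure.} Your argument does not hold in the generality of the statement. The claim $D(u)/u\in\mathcal{A}$ requires $u$ to generate $\mathcal{I}$: for $u=xy^n\in y^n\mathcal{A}$ and $D=\partial_x$ you get $1/x$. Moreover, $\Omega^1_{\mathcal{A}}(\log\mathcal{I})$ is in general not generated by regular forms and $dh/h$: for $h=xy$, $dx/x\notin\mathcal{A}\,dx+\mathcal{A}\,dy+\mathcal{A}\,dh/h$. The argument does work in the paper's case. There the module is generated by $dx$ and $dy/y=\frac1n\,d(y^n)/y^n$, and you extend with $\mathcal{L}_D(f\omega)=D(f)\omega+f\mathcal{L}_D\omega$.

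The general argument uses that $\Omega^1_{\mathcal{A}}(\log\mathcal{I})$ is the dual of $Der_{\mathcal{A}}(\log\mathcal{I})$, i.e.\ the rational $1$-forms pairing into $\mathcal{A}$ with every logarithmic derivation. For $D,E\in Der_{\mathcal{A}}(\log\mathcal{I})$,
\[
\langle E,\mathcal{L}_D\omega\rangle = D\langle E,\omega\rangle-\langle [D,E],\omega\rangle\in\mathcal{A},
\]
because $[D,E]$ is again logarithmic. The same duality gives $\pi(\omega_i,\omega_j)=\langle\tilde H(\omega_i),\omega_j\rangle\in\mathcal{A}$, which you use without comment.

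\emph{Reduction to generators.} This paragraph does not work over $\mathcal{A}$. First, $dx/x$ is not logarithmic along $y^n\mathcal{A}$. Second, tensoriality of the Jacobiator needs the anchor identity $\tilde H[\alpha,\beta]=[\tilde H(\alpha),\tilde H(\beta)]$, which is itself a consequence of Jacobi for $\{.,.\}$. Over $K$, with the $K$-basis $dx_i$, the check is legitimate. So fold that paragraph into the localization argument, which is what actually carries the proof.
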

This bracket named \textbf{Koszul bracket}  imply the  following $corollary$ that we need some properties
to construct the logarithmic cochain complex associated to the bivector $\pi = y^n\partial_{x}\wedge \partial_{y}$.
\begin{corollary} \cite{DJ1} $\label{Cor1} $
Let $u,v\in \mathcal{I}^{*}$ and $[.,.]$ the \textbf{Koszul bracket} of logarithmic 1-forms.
\item[a.] $[\dfrac{du}{u}, \dfrac{dv}{v}] = d(\dfrac{1}{uv}\{u,v\})$,
\item[b.] $[\dfrac{du}{u}, dv] = d(\dfrac{1}{u}\{u,v\})$,
\item[c.] $[du, \dfrac{dv}{v}] =d(\dfrac{1}{v}\{u,v\})$,
\item[d.] $[du, dv] =  d\{u,v\}$.
\end{corollary}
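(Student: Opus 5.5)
The four formulas all come from Theorem \ref{T2}: substitute the relevant pairs of generators into the defining expression of the Koszul bracket and simplify. The inputs are the $\mathcal{A}$-linearity of $\tilde{H}$ from Proposition \ref{P1}, Cartan's formula $\mathcal{L}_X=i_X d+d\,i_X$ on $1$-forms, and the identities $d(du)=0$ and $d\bigl(\tfrac{du}{u}\bigr)=0$. The last identity holds formally, since $\tfrac{du}{u}=d\log u$, or directly: $d\bigl(\tfrac{1}{u}\bigr)\wedge du=-\tfrac{1}{u^2}du\wedge du=0$. Because both arguments are closed in every case, $\mathcal{L}_{X}\omega=d(i_X\omega)=d\,\omega(X)$. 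So for closed $\omega_i,\omega_j$ the bracket reduces to
\[
[\omega_i,\omega_j]=d\bigl(\omega_j(\tilde{H}(\omega_i))\bigr)+d\bigl(\omega_i(\tilde{H}(\omega_j))\bigr)-d\,\pi(\omega_i,\omega_j).
\]
This reduction is the key step, and after it each case is a computation of pairings.

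Next I compute the pairings using $H(du)=\{u,\cdot\}$, so that $dv(H(du))=\{u,v\}$, together with $\pi(du,dv)=\{u,v\}$. I read the sign conventions of Theorem \ref{T2} so that $\omega_j(\tilde{H}\omega_i)$ equals $\pi(\omega_i,\omega_j)$. The skew-symmetry of $\pi$ then gives $\omega_i(\tilde{H}\omega_j)=\pi(\omega_j,\omega_i)=-\pi(\omega_i,\omega_j)$, and the three terms collapse to $d\,\pi(\omega_i,\omega_j)$. If the convention is the opposite one, the roles of the terms swap but the net result is still $\pm d\pi(\omega_i,\omega_j)$. I would fix the sign by checking case (d), where one needs $[du,dv]=d\{u,v\}$; this is the classical Koszul identity.

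It then remains to compute $\pi$ on logarithmic forms using Proposition \ref{P1}:
\[
\pi\Bigl(\frac{du}{u},\frac{dv}{v}\Bigr)=\frac{1}{uv}\,\pi(du,dv)=\frac{\{u,v\}}{uv},\qquad \pi\Bigl(\frac{du}{u},dv\Bigr)=\frac{\{u,v\}}{u},\qquad \pi\Bigl(du,\frac{dv}{v}\Bigr)=\frac{\{u,v\}}{v}.
\]
Here $\pi$ is extended $\mathcal{A}$-bilinearly, and the localized coefficients are legitimate because $u,v\in\mathcal{I}^*$ are invertible in the fraction field. Substituting these into $d\,\pi(\omega_i,\omega_j)$ yields (a)–(d).

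The main obstacle is justifying these steps inside the logarithmic module. One must check that $\tfrac{1}{uv}\{u,v\}$ and the other coefficients are genuine functions, or at least make sense where the bracket is computed, and that Cartan calculus applies to $\Omega^1_{\mathcal{A}}(\log\mathcal{I})$. I would handle this by working in the localization $\mathcal{A}[u^{-1},v^{-1}]$. There $\Omega^1$ contains $\tfrac{du}{u}$, all the identities above hold by ordinary Kähler calculus, and the restriction back to the logarithmic modules is compatible, as Proposition \ref{P1} ensures.
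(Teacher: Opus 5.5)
The paper gives no proof of this corollary (it is quoted from \cite{DJ1}), so your argument has to stand on its own. Most of it is sound. Both $du$ and $\frac{du}{u}$ are closed, Cartan's formula turns each Lie derivative into $d$ of a pairing, your values of $\pi$ on logarithmic forms are correct, and working in $\mathcal{A}[u^{-1},v^{-1}]$ handles the denominators legitimately.

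The gap is in the step you call the key one: the sign. With your pairing, $\omega_j(\tilde{H}\omega_i)=\pi(\omega_i,\omega_j)$ and $\omega_i(\tilde{H}\omega_j)=-\pi(\omega_i,\omega_j)$. Your displayed reduction then gives $d\pi(\omega_i,\omega_j)-d\pi(\omega_i,\omega_j)-d\pi(\omega_i,\omega_j)=-d\pi(\omega_i,\omega_j)$, not $+d\pi(\omega_i,\omega_j)$. So what you actually derive is $[du,dv]=-d\{u,v\}$ and the negatives of (a)--(c).

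Your fallback does not repair this. The pairing convention is not a matter of reading: it is forced by $H(du)=\{u,\cdot\}$, $\tilde{H}(\frac{du}{u})=\frac{1}{u}H(du)$ and $\pi(du,dv)=\{u,v\}$. Even if you flipped it, the two Lie-derivative terms in your display would be $d\pi(\omega_i,\omega_j)$ and $d\pi(\omega_j,\omega_i)$ in some order. They cancel, so the result is $-d\pi(\omega_i,\omega_j)$ under either convention. Choosing the sign so that (d) comes out right also assumes what you are proving.

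The missing point is that the formula of Theorem~\ref{T2} as printed contains a misprint and cannot be used verbatim. With a plus sign between the two Lie derivatives, the expression is a part symmetric in $(\omega_i,\omega_j)$ plus the antisymmetric term $-d\pi(\omega_i,\omega_j)$. The symmetric part does not vanish on non-closed forms: for the paper's $\pi$ and $\omega_i=\omega_j=y\,dx$ it equals $2y^{n+1}dx$. So the printed bracket is not even alternating. The Koszul bracket is
\[
[\omega_i,\omega_j]=\mathcal{L}_{\tilde{H}(\omega_i)}\omega_j-\mathcal{L}_{\tilde{H}(\omega_j)}\omega_i-d\,\pi(\omega_i,\omega_j).
\]
With this bracket, for closed $\omega_i,\omega_j$ your reduction reads $d\pi(\omega_i,\omega_j)+d\pi(\omega_i,\omega_j)-d\pi(\omega_i,\omega_j)=d\pi(\omega_i,\omega_j)$. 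Substituting your values of $\pi$ on $\frac{du}{u}$, $\frac{dv}{v}$, $du$, $dv$ then gives (a)--(d).

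Your hedge is also inaccurate: with the correct bracket but the opposite pairing convention you would get $-3\,d\pi(\omega_i,\omega_j)$, not $\pm d\pi(\omega_i,\omega_j)$.
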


 \begin{definition} \cite{AJ}
 An algebraic cochain complex consists of a sequence of
 $\mathcal{A}$-modules $\textbf{ C}^{*}$ and $\mathcal{A}$-module
 homomorphisms $d^{q}: C^{q-1} \longrightarrow C^{q}$ such that
 $d^{q}\circ d^{q-1}=0$, $q\in \mathbb{Z}$. It is denoted by
 $(\textbf{ C}^{*}, d^{*})$ and it induce the following sequence:
 \[\xymatrix{(\textbf{ C}^{*}, d^{*}):  ...\ar[r]^{d^{q-2}}& C^{q-2} \ar[r]^{d^{q-1}}& C^{q-1}  \ar[r]^{d^{q}}& C^{q}   \ar[r]^{d^{q+1}}& ...}\]
 \end{definition}
 $d:=d^{q}$ is the coboundary operator. Elements of $(C^{q}, d^{q})$
 are called $q$-cochain, those of $Z^{q}(\textbf{ C}^{*}, d^{*})=Ker
 d^{q}$ are called $q$-cocycles and those of $B^{q}(\textbf{ C}^{*},
 d^{*})= Imd^{q-1}$ are called $q$-coboundary. Since $d^{2}=0$, then
 $B^{q}(\textbf{ C}^{*},  d^{*})$ is submodule of  $Z^{q}(\textbf{
 C}^{*}, d^{*})$ and $H^{q-1}(\textbf{ C}^{*}, d^{*})=
 \dfrac{Z^{q}(\textbf{ C}^{*}, d^{*})}{B^{q}(\textbf{ C}^{*},
 d^{*})}$ is well defined and called the $(q-1)^{th}$ cohomology
 $\mathcal{A}$-module of $(\textbf{ C}^{*}, d^{*})$.
 
 \begin{example}
 Let $\mathcal{A} = \mathbb{F}[x,y]$.
 According to \cite{CRPV}, for each Poisson structure $\{x,y\}^{\varphi} = \varphi$ on affine plane $\mathbb{F}^2$ the cochain complex is given as follows:
  \[\xymatrix{0 \ar[r]^{d^{0}}& \mathcal{A} \ar[r]^{d^{1}}& \mathcal{A}^2  \ar[r]^{d^{2}}& \mathcal{A}  \ar[r]^{d^{3}}& 0}\] 
  where for all $f,g\in \mathcal{A}$, $d^1(f)= \varphi \left( \partial_{y}f, -\partial_{x}f\right)$,    $d^2(f,g)= \varphi \left( \partial_{x}f + \partial_{y}g \right) - f \partial_{x}\varphi - g\partial_{y}\varphi$ and $d^0 = d^3 = 0$.\\
  After computing for the case where the Poisson structure is given by $\{x,y\}^{\varphi}=y^n$, we have 
    $d^1(f)= y^n \left( \partial_{y}f, -\partial_{x}f\right)$,    $d^2(f,g)= y^n \left( \partial_{x}f + \partial_{y}g \right)  - ny^{n-1}g$. The  associated classical Poisson cohomology of degree $k$ is  $H_{Pois}^{k}(\mathcal{P}) = \dfrac{Kerd^{k+1}}{Imd^{k}}$(we can see it in \cite{BI}).
 \end{example}
 
 \begin{definition} \cite{AJ}
 A cochain morphism $f^{*}: (\textbf{ C}^{*}, d^{*}) \longrightarrow
 (\tilde{\textbf{ C}^{*}}, \tilde{d}^{*})$ consists of a sequence of
 the homomorphisms of $\mathcal{A}$-module $f^{q}: (C^{q}, d^{q})
 \longrightarrow (\tilde{C}^{q}, \tilde{d}^{q})$ that commute with
 the coboundary operators. That is equivalent to say that the squares
 of the following diagram:
  \[ \xymatrix{ ...\ar[r]^{d^{q-1}}& C^{q-1} \ar[d]^{f^{q-1}}\ar[r] ^{d^{q}} & C^{q}\ar[d]^{f^{q}}\ar[r]^{d^{q+1}}&C^{q+1} \ar[d]^{f^{q+1}}\ar[r] ^{d^3}& ...\\
       ...\ar[r]^{\tilde{d}^{q-1}} & \tilde{C}^{q-1} \ar[r] ^{\tilde{d}^{q}}& \tilde{C}^{q}  \ar[r]^{\tilde{d}^{q+1}} &  \tilde{C}^{q+1}  \ar[r]& ...}
      \]
 \end{definition} 

\section{Logarithmic Poisson's cochain complex}
For $\mathcal{A} = \mathbb{F}[x_{1},...,x_{p}]$.  Denote by 
      $\mathfrak{L}_{alt}( \Omega^{1}_{\mathcal{A}}(log\mathcal{I}),
       \mathcal{A}) = \underset{i\geq
       0}{\bigoplus}\mathfrak{L}_{alt}^{p}(\Omega^{1}_{\mathcal{A}}(log\mathcal{I}),
       \mathcal{A}) $  the $\mathcal{A}$-module of the $p$- multilinear
       alternate forms on $\Omega^{1}_{\mathcal{A}}(log\mathcal{I})$; $\mathcal{I}$  be an ideal of $\mathcal{A}$.
      
\begin{definition} $\label{5}$
  We name the logarithmic cochain complex of dimension $p$ associated to all alternate $p$-linear map $\tilde{H}: \Omega^{1}_{\mathcal{A}}(log \mathcal{I}) \longrightarrow Der_{\mathcal{A}}(log \mathcal{I})$, the following sequence:
  \begin{displaymath} \label{CC1}
       (\textbf{C}^{*},d^{*}):    ...\stackrel{d^{i}_{\tilde{H}}}{\longrightarrow}\wedge^{i}Der_{\mathcal{A}}(log \mathcal{I})
              \stackrel{d^{i+1}_{\tilde{H}}}{\longrightarrow} \wedge^{i+1}Der_{\mathcal{A}}(log \mathcal{I}) \stackrel{d^{i+2}_{\tilde{H}}}{\longrightarrow} ...
              \end{displaymath}
        where the differential 
                $d^{i}_{\tilde{H}}:  \mathfrak{L}^{i-1}_{alt}( \Omega^{1}_{\mathcal{A}}(log\mathcal{I}), \mathcal{A})
                \longrightarrow \mathfrak{L}_{alt}^{i}( \Omega^{1}_{\mathcal{A}}(log\mathcal{I}), \mathcal{A})$ is the logarithmic Poisson differential such that,
                for all  $f \in \mathfrak{L}_{alt}(\Omega^{1}_{\mathcal{A}}(log\mathcal{I}), \mathcal{A})$
                and $ \omega_{1},...,\omega_{p+1}\in \Omega^{1}_{\mathcal{A}}(log\mathcal{I})$:
                \begin{eqnarray*}
                  d^{i}_{\tilde{H}}f(\omega_{1},...,\omega_{p+1})
                   &=& \sum_{i=1}^{p+1}(-1)^{i-1}\tilde{H}(\omega_{i})f( \omega_{1},...,\hat{\omega}_{i},...,\omega_{p+1})+ \label{*} ~~~~~~~~~~~~~~~~~~  \\
                   & &\sum_{1\leq i\leq j \leq p+1} (-1)^{i+j}f([\omega_{i}, \omega_{j}]_{_{\Omega^{^{1}}_{\mathcal{A}}(log\mathcal{I})}},
                   \omega_{1},...,\hat{\omega}_{i},
                   ...,\hat{\omega}_{j},...,\omega_{p+1}).
                \end{eqnarray*}
                      
  \end{definition}
  The corresponding cohomology is the Lichnerowicz-Poisson cohomology  called the logarithmic Poisson cohomology of the logarithmic Poisson algebra $(\mathcal{A}, \{.,.\}, \tilde{H})$
          where $\tilde{H}$ is the  logarithmic Hamiltonian map. For all $a\in \Omega^{1}_{\mathcal{A}}(log \mathcal{I})$, it verifies the following compatibility:
            $$ [\omega_{i},a\omega_{j}]= \tilde{H}(\omega_{i})(a)\omega_{j} + a[\omega_{i},\omega_{j}].$$

For the remainder of this work, we will assume that $\mathcal{A} = \mathbb{F}[x,y]$, for all $n>1$ $\mathcal{I}=y^n\mathcal{A}$ the ideal of $\mathcal{A}$ and $\mathbb{F}_{n-1}[y]$ the vector space of  polynomial in variable $y$ with degree less than or equal to $n-1$.  
$\int b dx$ denotes the antiderivative of $b$ with respect to $x$.
The basis of the $\mathcal{A}$-modules $Der_{\mathcal{A}}(log \mathcal{I}) $  and $\Omega_{\mathcal{A}}^{1}(log \mathcal{I}) $  are respectively given by $\langle \delta^{1} = \partial_{x}, \delta^{2} = y\partial_{y} \rangle$ and $\langle\omega_{1}=dx, \omega_{2}=\dfrac{dy}{y} \rangle$.

\begin{proposition}
  The logarithmic  Hamiltonian operator $\tilde{H}$ induced by the  bivector $\pi = y^n \partial_{x} \wedge \partial_{y}$ 
 is
 $\tilde{H}: \Omega_{\mathcal{A}}^{1}(log\mathcal{I}) \longrightarrow
 Der_{\mathcal{A}}(log \mathcal{I})$  such that, for all $\omega =
 a\omega_{1} + b \omega_{2}\in \Omega_{\mathcal{A}}^{1}(log\mathcal{I})$, $a,b \in \mathcal{A}=\mathbb{F}[x,y]$:
  \begin{equation}
              \tilde{H}(\omega)=   y^{n-1}(a \delta^{2}- b \delta^{1}) \in Der_{\mathcal{A} }(Log \mathcal{I}), n>1.
       \end{equation}
 \end{proposition}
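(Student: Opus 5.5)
We compute $\tilde{H}$ on the basis $\langle \omega_{1}=dx, \omega_{2}=\frac{dy}{y}\rangle$ of $\Omega^{1}_{\mathcal{A}}(log\mathcal{I})$ and then extend by $\mathcal{A}$-linearity. By Proposition \ref{P1}, $\tilde{H}$ is an $\mathcal{A}$-module homomorphism given by $\tilde{H}(du)=H(du)$ on exact forms and $\tilde{H}(\frac{du}{u})=\frac{1}{u}H(du)$ on logarithmic ones. Here $H$ is the Hamiltonian operator of the example, $H(df)=y^n(\partial_x f\,\partial_y-\partial_y f\,\partial_x)$. So for $\omega=a\omega_1+b\omega_2$ we get
$$\tilde{H}(\omega)=a\tilde{H}(dx)+b\tilde{H}\left(\frac{dy}{y}\right),$$
and it suffices to evaluate the two basis images.

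\textbf{First generator.} Take $f=x$. Then $H(dx)=y^n(1\cdot\partial_y-0\cdot\partial_x)=y^n\partial_y$. Rewriting this in the basis $\langle\delta^1=\partial_x,\delta^2=y\partial_y\rangle$ of $Der_{\mathcal{A}}(log\mathcal{I})$ gives $y^n\partial_y=y^{n-1}\,y\partial_y=y^{n-1}\delta^2$.

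\textbf{Second generator.} Take $u=y$. Then $H(dy)=y^n(0\cdot\partial_y-1\cdot\partial_x)=-y^n\partial_x$, so $\tilde{H}(\frac{dy}{y})=\frac{1}{y}(-y^n\partial_x)=-y^{n-1}\delta^1$. Both images have polynomial coefficients because $n>1$, indeed $n\ge1$ already suffices. They are also logarithmic along $\mathcal{I}=y^n\mathcal{A}$: $\delta^1(y^n)=0$ and $\delta^2(y^n)=ny^n\in\mathcal{I}$, and $Der_{\mathcal{A}}(log\mathcal{I})$ is an $\mathcal{A}$-module. Combining the two cases gives $\tilde{H}(\omega)=y^{n-1}(a\delta^2-b\delta^1)$.

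The only delicate point is justifying that $\tilde{H}$ is well defined as an $\mathcal{A}$-linear map on $\Omega^1_{\mathcal{A}}(log\mathcal{I})$. In particular, the rule $\frac{du}{u}\mapsto\frac1u H(du)$ must agree with linearity: for instance $\frac{d(y^n)}{y^n}=n\frac{dy}{y}$, and $\frac{1}{y^n}H(d y^n)=\frac{1}{y^n}ny^{n-1}H(dy)=n\tilde{H}(\frac{dy}{y})$, so the two assignments coincide. We take this consistency from Proposition \ref{P1}, which is assumed. What remains is to check that $\{dx,\frac{dy}{y}\}$ is indeed a free basis, so that defining $\tilde{H}$ on the basis determines it. This holds because $\Omega^1_{\mathcal{A}}(log\mathcal{I})$ is dual to the free module with basis $\partial_x$, $y\partial_y$, and these pair to the identity matrix against $dx$ and $\frac{dy}{y}$.

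As a sanity check, $\langle\omega_2,\tilde{H}(\omega_1)\rangle=y^{n-1}$ recovers $\pi(dx,\frac{dy}{y})=\frac{1}{y}\{x,y\}=y^{n-1}$, which is consistent with the bivector.
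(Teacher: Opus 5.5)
Your proposal is correct and follows essentially the same route as the paper: you compute $\tilde{H}(dx)=H(dx)=y^{n-1}\delta^{2}$ and $\tilde{H}(\frac{dy}{y})=\frac{1}{y}H(dy)=-y^{n-1}\delta^{1}$, then extend by $\mathcal{A}$-linearity. Your additional checks are not in the paper's proof, but they only reinforce the argument: that the images lie in $Der_{\mathcal{A}}(log\mathcal{I})$, that $\{dx,\frac{dy}{y}\}$ is dual to $\{\delta^{1},\delta^{2}\}$, and that the rule $\frac{du}{u}\mapsto\frac{1}{u}H(du)$ is consistent.
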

 \begin{proof}
According to \cite{IV}, any polynomial $h\in \mathcal{A}$ induce the Hamiltonian map $H: \Omega^{1}_{\mathcal{A}}\longrightarrow Der_{\mathcal{A}}$ such that $H(dx) = \{x,.\}$. 
And for all 1-form  $\omega \in \Omega_{\mathcal{A}}^{1}(log\mathcal{I})$ we have $H(\omega) \in Der_{\mathcal{A}}(log \mathcal{I})$. On the other words $H(\Omega_{\mathcal{A}}^{1}(log\mathcal{I}) ) \subset Der_{\mathcal{A}}(log \mathcal{I})$.
          Then this structure induce also  another homomorphism of $\mathcal{A}-$module
         $\tilde{H}\in Hom( \Omega_{\mathcal{A}}^{1}(log\mathcal{I}), Der_{\mathcal{A}}(log \mathcal{I}))$
         that we call the logarithmic Hamiltonian map $\tilde{H}: \Omega_{\mathcal{A}}^{1}(log\mathcal{I}) \longrightarrow   Der_{\mathcal{A}}(log \mathcal{I})$ such that  $\tilde{H}(\omega_{1})  = \tilde{H}(dx) = H(dx) = y^n \partial_{y} = y^{n-1}\delta^{2}$ 
 and  $\tilde{H}(\omega_{2}) =   \tilde{H}(\dfrac{dy}{y}) = \dfrac{1}{y}H(dy) = - y^{n-1} \delta^{1}$.
 Since $\tilde{H}$ is $\mathcal{A}$-linear, we
  deduce that for all 1-forms $\omega= a \omega_{1} + b\omega_{2} \in \Omega_{\mathcal{A} }^{1}(log \mathcal{I})$ with $a, b \in \mathcal{A}$, we have $\tilde{H}(\omega)= a \tilde{H}(\omega_{1}) + b\tilde{H}(\omega_{2})$. Then, the expression of the   logarithmic  Hamiltonian map 
$   \tilde{H}(\omega)=   y^{n-1}(a \delta^{2}- b \delta^{1}) \in Der_{\mathcal{A} }(log \mathcal{I}) ~for ~all~a,~b\in \mathcal{A}.$
       This completes the proof of the above proposition.
 \end{proof}
 By convention we shall consider the following canonical isomorphisms given by:
  \begin{enumerate} 
  \item[$\bullet$] $\mathfrak{L}_{alt}^{0}(\Omega^{1}_{\mathcal{A}}(log\mathcal{I}, \mathcal{A}) \simeq \mathcal{A}$, 
\item[$\bullet$] $\mathfrak{L}_{alt}^{1}(\Omega^{1}_{\mathcal{A}}(log\mathcal{I}, \mathcal{A}) \simeq  Der_{\mathcal{A} }(log \mathcal{I}) $,
 \item[$\bullet$] $\mathfrak{L}_{alt}^{2}(\Omega^{1}_{\mathcal{A}}(log\mathcal{I} , \mathcal{A}) \simeq \wedge^{2} Der_{\mathcal{A} }(log \mathcal{I}) $,
    \item[$\bullet$] $\mathfrak{L}_{alt}^{p}(\Omega^{1}_{\mathcal{A}}(log\mathcal{I} , \mathcal{A}) \simeq 0$, for $p>2$ or $p<0$.
 \end{enumerate} 
Denote by $\textbf{C}^{*}= \mathfrak{L}_{alt}^{*}(\Omega^{1}_{\mathcal{A}}(log\mathcal{I}, \mathcal{A}) = \wedge^{*}Der_{\mathcal{A}}(log \mathcal{I})$. According to  \textit{definition} \ref{5} the cochain complex associated to $\tilde{H}$ is given by
\begin{equation} 
(\textbf{C}^{*},d_{\tilde{H}}^{*}): 0\stackrel{d^{0}_{\tilde{H}}}{\longrightarrow}\mathcal{A}\stackrel{d^{1}_{\tilde{H}}}{\longrightarrow}Der_{\mathcal{A}}(log \mathcal{I})
     \stackrel{d^{2}_{\tilde{H}}}{\longrightarrow} \wedge^{2}Der_{\mathcal{A}}(log \mathcal{I}) \stackrel{d^{3}_{\tilde{H}}}{\longrightarrow} 0. \label{(5)}
\end{equation}     
      The logarithmic Poisson differentials are  $d^{0}_{\tilde{H}} = d^{3}_{\tilde{H}} = 0$ where $d^{1}_{\tilde{H}}$ and $d^{2}_{\tilde{H}}$ are given in the following $lemma$:
 \begin{lemma} \label{41}
 The logarithmic  Poisson differentials  $d^{1}_{\tilde{H}}$ and $d^{2}_{\tilde{H}}$ are given as follows:
 \begin{enumerate}
 \item[1.]  $d^{1}_{\tilde{H}}(a) = y^{n-1}(\delta^{2}a \delta^{1} - \delta^{1}a \delta^{2})\in Der_{\mathcal{A}}(log \mathcal{I})$ for all $a \in \mathcal{A}$,
 \item[2.]  
 $d^{2}_{\tilde{H}}(\overrightarrow{a}) = \left( y^{n-1}(\delta^{1}a^{1} + \delta^{2}a^{2} )-(n-1)y^{n-1}a^{2} \right) \delta^{1}\wedge \delta^{2} \in \wedge^{2}Der_{\mathcal{A}}(log \mathcal{I})$ for all element $\overrightarrow{a}= a^{1}\partial_{x} + a^{2} y\partial_{y}= a^{1}\delta^{1} + a^{2}\delta^{2} \in  Der_{\mathcal{A}}(log \mathcal{I})$.
 \end{enumerate}
 \end{lemma}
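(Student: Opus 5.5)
The plan is to evaluate both differentials directly from the formula in Definition \ref{5}, on the basis $\omega_{1}=dx$, $\omega_{2}=\frac{dy}{y}$ of $\Omega^{1}_{\mathcal{A}}(log\mathcal{I})$. We use the canonical identifications stated just before the lemma: a $1$-cochain $f$ corresponds to the logarithmic derivation $f(\omega_{1})\delta^{1}+f(\omega_{2})\delta^{2}$. This works because $\delta^{1}=\partial_{x}$ and $\delta^{2}=y\partial_{y}$ form the dual basis to $\omega_{1},\omega_{2}$, since $\langle dx,\partial_x\rangle=1$ and $\langle \frac{dy}{y}, y\partial_y\rangle=1$. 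Likewise a $2$-cochain $g$ corresponds to $g(\omega_{1},\omega_{2})\,\delta^{1}\wedge\delta^{2}$. Throughout we use the expression of $\tilde{H}$ from the preceding proposition, namely $\tilde{H}(\omega_{1})=y^{n-1}\delta^{2}$ and $\tilde{H}(\omega_{2})=-y^{n-1}\delta^{1}$.

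For $d^{1}_{\tilde{H}}$, the formula has no bracket term, so $d^{1}_{\tilde{H}}a(\omega)=\tilde{H}(\omega)(a)$. Evaluating on the basis gives $d^{1}_{\tilde{H}}a(\omega_{1})=y^{n-1}\delta^{2}a$ and $d^{1}_{\tilde{H}}a(\omega_{2})=-y^{n-1}\delta^{1}a$. Under the identification above this is $y^{n-1}(\delta^{2}a\,\delta^{1}-\delta^{1}a\,\delta^{2})$, which is item 1.

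For $d^{2}_{\tilde{H}}$, take $\overrightarrow{a}=a^{1}\delta^{1}+a^{2}\delta^{2}$, so that $\overrightarrow{a}(\omega_{1})=a^{1}$ and $\overrightarrow{a}(\omega_{2})=a^{2}$. The formula gives
\[
d^{2}_{\tilde{H}}\overrightarrow{a}(\omega_{1},\omega_{2})=\tilde{H}(\omega_{1})a^{2}-\tilde{H}(\omega_{2})a^{1}-\overrightarrow{a}([\omega_{1},\omega_{2}]).
\]
The first two terms are $y^{n-1}\delta^{2}a^{2}+y^{n-1}\delta^{1}a^{1}$.

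The remaining step, and the main point to get right, is the Koszul bracket $[\omega_{1},\omega_{2}]$. We compute it with Corollary \ref{Cor1}(c), taking $u=x$ and $v=y$:
\[
[dx,\tfrac{dy}{y}]=d\bigl(\tfrac{1}{y}\{x,y\}\bigr)=d(y^{n-1})=(n-1)y^{n-2}dy=(n-1)y^{n-1}\omega_{2}.
\]
Since $\overrightarrow{a}$ is $\mathcal{A}$-linear, this gives $\overrightarrow{a}([\omega_{1},\omega_{2}])=(n-1)y^{n-1}a^{2}$. Substituting into the display yields the coefficient of $\delta^{1}\wedge\delta^{2}$ stated in item 2.

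The only delicate points are sign and normalization conventions. The sign $(-1)^{1+2}=-1$ in front of the bracket term is what produces the minus sign on $(n-1)y^{n-1}a^{2}$. We also rely on the pairing convention that makes $\{\delta^{1},\delta^{2}\}$ dual to $\{\omega_{1},\omega_{2}\}$. As a consistency check, we will verify that $d^{2}_{\tilde{H}}\circ d^{1}_{\tilde{H}}=0$. To do this, substitute $a^{1}=y^{n-1}\delta^{2}a$ and $a^{2}=-y^{n-1}\delta^{1}a$, and use $\delta^{2}(y^{n-1})=(n-1)y^{n-1}$ together with $[\delta^{1},\delta^{2}]=0$.
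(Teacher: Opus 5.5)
Your proposal is correct and follows essentially the same route as the paper: evaluate the formula of Definition~\ref{5} on the basis $\omega_1=dx$, $\omega_2=\frac{dy}{y}$ with dual basis $\delta^1,\delta^2$, using $\tilde{H}(\omega_1)=y^{n-1}\delta^2$, $\tilde{H}(\omega_2)=-y^{n-1}\delta^1$, and Corollary~\ref{Cor1}(c) to get $[dx,\frac{dy}{y}]=d(y^{n-1})$. The only difference is cosmetic: the paper treats $a^1\delta^1$ and $a^2\delta^2$ separately, and its intermediate display flips the signs of the first two terms and writes $y^{n-1}\partial_y f_2$ where $y^{n}\partial_y f_2$ is needed, so your sign pattern $\tilde{H}(\omega_1)\overrightarrow{a}(\omega_2)-\tilde{H}(\omega_2)\overrightarrow{a}(\omega_1)-\overrightarrow{a}([\omega_1,\omega_2])$ is the consistent one, and the paper's final formula agrees with yours.
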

\begin{proof}
 \begin{enumerate}
 \item[1.]   
 Let $f\in \mathcal{A}$.  We remember that $Der_{\mathcal{A}}(log \mathcal{I}) = \langle \delta^{1} = \partial_{x}, \delta^{2} = y\partial_{y} \rangle$ and also $\Omega^{1}_{\mathcal{A}}(log \mathcal{I}) = \langle\omega_{1}=dx, \omega_{2}=\dfrac{dy}{y} \rangle$.
For an exact 1-form $dx$, we set $\tilde{H}(dx) = H(dx),$ since $dx$ is also an ordinary 1-form.
 According to the cochain complex $(\ref{(5)})$ we have the relation $d_{\tilde{H}}^{1}(f) = f^{1} \delta^{1} + f^{2}\delta^{2} \in Der_{\mathcal{A}}(log \mathcal{I})$ where $f^{1},f^{1}\in \mathcal{A}$.  We use 
the definition of the logarithmic Poisson differential $d^{i}_{\tilde{H}}$ given in \textit{definition} $(\ref{5})$ which we combine with the $\mathcal{A}-$module homomorphism given in $proposition$ $(\ref{P1})$, the Koszul braket $[.,.]$ of $theorem$ $(\ref{T2})$ and the simplified expression of $[.,.]$ given in $corollary$ $ (\ref{Cor1}) $ to determine respectively $f^{1}$ and $f^{2}$ as follows:
   \begin{align*}
   d_{\tilde{H}}^{1}(f)\lrcorner dx &\ = f^{1}\\
      &\ = \tilde{H}(dx)f\\
   &\ = H(dx)f\\
   &\ = \{x,y\}\partial_{y}f\\
  &\ = y^{n-1}(y\partial_{y}f)
   \end{align*}
 We also have
   \begin{align*}
   d_{\tilde{H}}^{1}(f)\lrcorner\dfrac{dy}{y} &\ = f^{2}\\
   &\ = \tilde{H}(\dfrac{dy}{y})f\\
   &\ =\dfrac{1}{y}H(dy)f\\
  &\ = -y^{n-1}\partial_{x}f
   \end{align*}
 Then
 $d_{\tilde{H}}^{1}(f) = y^{n-1}(y\partial_{y}f \delta^{1} -\partial_{x}f \delta^{2})= y^{n-1}(\delta^{2}f \delta^{1} -\delta^{1}f \delta^{2})\in Der_{\mathcal{A} }(Log \mathcal{I})$.
 \item[2.] Let $f \in Der_{\mathcal{A} }(Log \mathcal{I}) = \langle \delta^{1} = \partial_{x} ; \delta^{2} = y\partial_{y} \rangle_{\mathcal{A} }$, then $ f = f_{1}\partial_{x} + f_{2}y\partial_{y}$. So we have: 
 \begin{align*}
 d_{\tilde{H}}^{2}(f) &\ = d_{\tilde{H}}^{2}(f_{1}\partial_{x} + f_{2}y\partial_{y}).\\
 &\ = d_{\tilde{H}}^{2}(f_{1}\partial_{x}) + d_{\tilde{H}}^{2}(f_{2}y\partial_{y})\in \wedge^{2}Der_{\mathcal{A} }(Log \mathcal{I}).  \\
 \end{align*} There exist $a,b\in \mathcal{A}$ such that $ d_{\tilde{H}}^{2}(f_{1}\partial_{x})=a \partial_{x}\wedge y \partial_{y}  $ and $ d_{\tilde{H}}^{2}(f_{2}y\partial_{y}) =  b \partial_{x}\wedge y\partial_{y}$.
We remember that $\delta^{i}\lrcorner \omega_{j}=\delta_{ij}$. We remember to the definition of the logarithmic Poisson differential $d^{i}_{\tilde{H}}$ given in \textit{definition} $(\ref{5})$ which we combine with the $\mathcal{A}-$module homomorphism given in $proposition$ $(\ref{P1})$, the Koszul braket $[.,.]$ of $theorem$ $(\ref{T2})$ and the simplified expression of $[.,.]$ given in $corollary$ $ (\ref{Cor1}) $
 to compute  $a$  as follows:
 \begin{align*}
 a &\ = d_{\tilde{H}}^{2}(f_{1}\partial_{x}) ( dx ,\dfrac{dy}{y} ).\\
  &\ =-\tilde{H}(dx)f_{1}\partial_{x}\lrcorner\dfrac{dy}{y}+\tilde{H}(\dfrac{dy}{y})(f_{1}\partial_{x}\lrcorner dx) -f_{1} \partial_{x}\lrcorner [dx, \dfrac{dy}{y}]\\
 &\ =-\tilde{H}(dx)f_{1}\partial_{x}\lrcorner\dfrac{dy}{y}+\tilde{H}(\dfrac{dy}{y})(f_{1}\partial_{x}\lrcorner dx) -f_{1} \partial_{x}\lrcorner d(\dfrac{1}{y}\{x,y\})\\
 &\ = y^{n-1}\partial_{x}f_{1}
 \end{align*}
 With the similar method we also compute $b$ as follows:
 \begin{align*}
 b &\ = d_{\tilde{H}}^{2}(f_{2}y\partial_{y}) ( dx ,\dfrac{dy}{y} )\\
  &\ =-\tilde{H}(dx)f_{2}y\partial_{y}\lrcorner\dfrac{dy}{y}+\tilde{H}(\dfrac{dy}{y})(f_{2}y\partial_{y}\lrcorner dx) -f_{2} y\partial_{y}\lrcorner [dx, \dfrac{dy}{y}]\\
 &\ =-\tilde{H}(dx)f_{2}y\partial_{y}\lrcorner\dfrac{dy}{y}+\tilde{H}(\dfrac{dy}{y})(f_{2}y\partial_{y}\lrcorner dx) -f_{2} y\partial_{y}\lrcorner d(\dfrac{1}{y}\{x,y\})\\
 &\ = y^{n-1}\partial_{y}f_{2}-(n-1)y^{n-1}f_{2}
 \end{align*}
 \end{enumerate}
 Then 
  $ d_{\tilde{H}}^{2}(\overrightarrow{a}) = \left( y^{n-1} (\partial_{x}a^{1} + y\partial_{y}a^{2}) -(n-1)y^{n-1}a^{2}\right)\delta^{1}\wedge \delta^{2}\in \wedge^{2}Der_{\mathcal{A} }(Log \mathcal{I})$, equivalent to   $ d_{\tilde{H}}^{2}(\overrightarrow{a}) = \left( y^{n-1} (\delta^{1}a^{1} + \delta^{2}a^{2}) -(n-1)y^{n-1}a^{2}\right)\delta^{1}\wedge \delta^{2}\in \wedge^{2}Der_{\mathcal{A} }(Log \mathcal{I})$.
 This completes the proof of the above $lemma$.
\end{proof}
We deduce a relationship between the logarithmic Hamiltonian operator  and the Lichnerowicz differential initialy define by $d_{\pi} = [\pi,.]$ associated to bivector $\pi$.
\begin{corollary}
Let $\pi = y^n \partial_{x} \wedge \partial_{y}$ with $n>1$; $[.,.]_{SN}$  the Schouten - Nijenhuis bracket and the logarithmic differential $\tilde{d}: \mathcal{A} \longrightarrow \Omega_{\mathcal{A}}(log \mathcal{I})$, $f\mapsto \tilde{d}f = \partial_{x}fdx + y\partial_{y}f\dfrac{dy}{y}$. For all $f\in \mathcal{A}$ we have $[\pi , f]_{SN} = -\tilde{H}(\tilde{d}f)$.
\end{corollary}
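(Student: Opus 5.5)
The plan is a direct computation of both sides in the bases $\langle\delta^{1},\delta^{2}\rangle$ and $\langle\omega_{1},\omega_{2}\rangle$ of Section 3, followed by a comparison.

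\textbf{Right-hand side.} The logarithmic differential gives $\tilde{d}f = a\,\omega_{1} + b\,\omega_{2}$ with $a=\partial_{x}f$ and $b=y\partial_{y}f=\delta^{2}f$. Since $\tilde{H}$ is $\mathcal{A}$-linear, the Proposition describing $\tilde{H}$ yields
$$\tilde{H}(\tilde{d}f)=y^{n-1}\bigl(\partial_{x}f\,\delta^{2}-y\partial_{y}f\,\delta^{1}\bigr)=y^{n}\bigl(\partial_{x}f\,\partial_{y}-\partial_{y}f\,\partial_{x}\bigr).$$
Hence $-\tilde{H}(\tilde{d}f)=y^{n}(\partial_{y}f\,\partial_{x}-\partial_{x}f\,\partial_{y})$. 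Up to sign this is $y^{n-1}(\delta^{2}f\,\delta^{1}-\delta^{1}f\,\delta^{2})$, i.e. the expression for $d^{1}_{\tilde{H}}(f)$ in Lemma \ref{41}, which serves as a consistency check.

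\textbf{Left-hand side.} I compute $[\pi,f]_{SN}$ for $\pi=X\wedge Y$ with $X=y^{n}\partial_{x}$ and $Y=\partial_{y}$. I use the graded Leibniz rule of the Schouten--Nijenhuis bracket together with $[X,f]_{SN}=X(f)$ for a vector field $X$. This gives $[X\wedge Y,f]_{SN}=\pm\bigl(X(f)Y-Y(f)X\bigr)$, so
$$[\pi,f]_{SN}=\pm\, y^{n}\bigl(\partial_{x}f\,\partial_{y}-\partial_{y}f\,\partial_{x}\bigr)=\pm\,\iota_{df}\pi,$$
where the sign is fixed by the chosen convention. Comparing the two displays, the identity holds exactly when $[\pi,f]_{SN}=-\iota_{df}\pi$. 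This is the sign produced by the convention $[f,X]_{SN}=-X(f)$ combined with graded antisymmetry, under which $[\pi,f]_{SN}$ equals minus the Hamiltonian vector field $H_{f}$ of the Example in Section \ref{Rapp}.

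\textbf{Expected obstacle.} The only real difficulty is this sign, since Schouten--Nijenhuis conventions differ in the literature. I will state explicitly the convention used for the Lichnerowicz differential $d_{\pi}=[\pi,\cdot]$ and verify it on the basis elements $x$ and $y$, where $[\pi,x]_{SN}$ should equal $-y^{n}\partial_{y}=-\tilde{H}(\omega_{1})$. The general case then follows by the derivation property in $f$.

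Finally, both sides depend only on $df$, because $\tilde{d}f$ and $df$ coincide as elements of $\Omega^{1}_{\mathcal{A}}(log\,\mathcal{I})$ and $\tilde{H}$ restricts to $H$ on ordinary forms (Proposition \ref{P1}). This also gives the coordinate-free statement $\tilde{H}\circ\tilde{d}=H\circ d=ad$, by the commutative diagram of Section \ref{Rapp}.
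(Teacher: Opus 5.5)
Your proof is correct. The right-hand side computation is the same as the paper's: both expand $\tilde{H}(\tilde{d}f)=\partial_x f\,\tilde{H}(dx)+y\partial_y f\,\tilde{H}(\frac{dy}{y})$ by $\mathcal{A}$-linearity. The routes differ on the bracket side.

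The paper never computes a Schouten--Nijenhuis bracket. It matches $-\tilde{H}(\tilde{d}f)$ with $d^{1}_{\tilde{H}}(f)$ from Lemma~\ref{41}, then quotes $d^{1}_{\tilde{H}}(f)=[\pi,f]_{SN}$ from Lichnerowicz. That match is an exact equality, not one holding ``up to sign'' as you write:
\[
-\tilde{H}(\tilde{d}f)=y^{n}(\partial_y f\,\partial_x-\partial_x f\,\partial_y)=y^{n-1}(\delta^{2}f\,\delta^{1}-\delta^{1}f\,\delta^{2})=d^{1}_{\tilde{H}}(f),
\]
and this equality is the whole content of the paper's argument.

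You instead compute $[\pi,f]_{SN}$ directly, and your sign claim checks out. Take $X=y^{n}\partial_x$, $Y=\partial_y$ and $[f,X]_{SN}=-X(f)$. Graded antisymmetry gives $[\pi,f]_{SN}=[f,\pi]_{SN}$, and the graded Leibniz rule then gives
\[
[\pi,f]_{SN}=[f,X]_{SN}\wedge Y-X\wedge[f,Y]_{SN}=-\bigl(X(f)Y-Y(f)X\bigr)=-\iota_{df}\pi=-\tilde{H}(\tilde{d}f).
\]

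Your version is self-contained, and it makes explicit that the sign in the statement depends on a convention, which the paper's bare citation hides. Your closing remark also shows the identity is just the classical $[\pi,f]_{SN}=-H(df)$, with nothing specifically logarithmic in it, since $\tilde{d}f=df$ and $\tilde{H}$ restricts to $H$ on $\Omega^{1}_{\mathcal{A}}$.

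The paper's version is organized to produce the commutative triangle $-\tilde{H}\circ\tilde{d}=d^{1}_{\tilde{H}}$ drawn right after the corollary, which ties the statement to the logarithmic cochain complex. Its correctness, however, rests on the cited identification $d^{1}_{\tilde{H}}=[\pi,\cdot]_{SN}$, and that is exactly what your computation verifies.
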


\begin{proof}
We remember that, the standard Lichnerowicz differential associated to the logarithmic Hamiltonian operator $\tilde{H}$ is given by $ d_{\tilde{H}}^{1}(f) = [\pi , f]_{SN}$(see \cite{AL}). For all $f\in \mathcal{A}$,
 \begin{align*}
 \tilde{H}(\tilde{d}f) &\ = \tilde{H}(\partial_{x}f dx + y\partial_{y}f\dfrac{dy}{y} )\\
  &\ = \partial_{x}f \tilde{H}( dx) + y\partial_{y}f \tilde{H}(\dfrac{dy}{y} )\\
 &\ = \partial_{x}f(y^n \partial_{y}) + y\partial_{y}f (-y^{n-1}\partial_{x} )\\
 &\ = y^{n-1}(\partial_{x}f \delta^{2} - y\partial_{y}f \delta^{1} )\\
 &\ = y^{n-1}(\delta^{1}f \delta^{2} - \delta^{2}f \delta^{1} )\\
  &\ = -d_{\tilde{H}}^{1}(f)
 \end{align*}
 Then  $ d_{\tilde{H}}^{1}(f) = [\pi , f]_{SN} = -\tilde{H}(\tilde{d}f)$. This completes the proof.
\end{proof}
We just  prove that the following diagram is commutative (i.e $-\tilde{H}\circ \tilde{d} = d_{\tilde{H}}^{1}$).
\begin{eqnarray}
 \xymatrix{\mathcal{A}\ar[r]^{\tilde{d}} \ar[dr]_{d_{\tilde{H}}^{1}}&\Omega_{\mathcal{A}}(log \mathcal{I})\ar[d]^{-\tilde{H}}\\
                                   &             Der_{\mathcal{A}}(log \mathcal{I})}
 \end{eqnarray}
Let $\partial^{0}$, $\partial^{1}$, $\partial^{2}$ be the canonical isomorphisms  respectively given by the following:
    $\partial^{0}:= Id_{\mathcal{A}}:    \mathcal{A} \longrightarrow  \mathcal{A}$, $a \longmapsto \partial^{0} (a) = a$(an identity map);\\
     $ \partial^{1}:   \mathcal{A}^{2}    \longrightarrow Der_{\mathcal{A}}(log \mathcal{I}), ~~
                          ( f^1 , f^2) \longmapsto \partial^{1}(f^1, f^2) = f^1 \delta^{1}  + f^2 \delta^{2} $;\\
     $\partial^{2}:  \mathcal{A}    \longrightarrow  \wedge^{2}Der_{\mathcal{A}}(log \mathcal{I}),~~
                                                                                   a \longmapsto \partial^{2}(a) = a\delta^{1}  \wedge \delta^{2}.$
    So its follows that  $\partial^{1}\circ d^{1} = d^{1}_{\tilde{H}} \circ \partial^{0}$, $\partial^{2}\circ d^{2} = d^{2}_{\tilde{H}} \circ \partial^{1}$ 
    and we consider the following differentials:
     \begin{enumerate}
      \item[i)]  $d^{1}(a) = y^{n-1}(y\partial_{y}a, -\partial_{x}a)$ for all $a \in \mathcal{A}$,
      \item[ii)]  
      $d^{2}(\overrightarrow{a}) = y^{n-1}(\partial_{x}a^{1} + y\partial_{y}a^{2} )-(n-1)y^{n-1}a^{2}$ for all $\overrightarrow{a}= (a^{1}, a^{2})\in \mathcal{A}\times \mathcal{A}$.
      \end{enumerate}
     this makes commutative the following  diagram: 
 
  \[ \xymatrix{ 0\ar[r]^{d^0}& \mathcal{A} \ar[d]^{\partial^{0}}\ar[r] ^{d^1} &\mathcal{A}\times \mathcal{A}\ar[d]^{\partial^1}\ar[r]^{d^2}&\mathcal{A} \ar[d]^{\partial^2}\ar[r] ^{d^3}&0\\
      0\ar[r] & \mathcal{A} \ar[r]&Der_{\mathcal{A}}(log \mathcal{I})\ar[r]&\bigwedge^{2} Der_{\mathcal{A}}(log \mathcal{I})\ar[r]&0}
     \] 
 The following lemma informs us about the corresponding cochain complex.  
 \begin{lemma} $\label{43}$
  $ \xymatrix{ 0 \ar[r]^{d^{0}} & \mathcal{A} \ar[r]^{d^{1}}  & \mathcal{A}\times \mathcal{A}
            \ar[r]^{d^{2}}  & \mathcal{A} \ar[r]^{d^{3}}  & 0 }
            $
           is the logarithmic  cochain complex.
 \end{lemma}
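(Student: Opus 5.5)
To prove Lemma \ref{43} we must check that the maps $d^{0},d^{1},d^{2},d^{3}$ are well defined between the indicated modules and that consecutive compositions vanish: $d^{k+1}\circ d^{k}=0$ for $k=0,1,2$. Since $d^{0}=0$ and $d^{3}=0$, the compositions $d^{1}\circ d^{0}$ and $d^{3}\circ d^{2}$ vanish trivially. The only substantive identity is therefore $d^{2}\circ d^{1}=0$. We prove it by direct computation from the explicit formulas i) and ii) above.

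Take $a\in\mathcal{A}$. By i),
\[
d^{1}(a)=(a^{1},a^{2})=\bigl(y^{n}\partial_{y}a,\,-y^{n-1}\partial_{x}a\bigr).
\]
Substituting into ii) gives
\[
d^{2}(d^{1}a)=y^{n-1}\bigl(\partial_{x}(y^{n}\partial_{y}a)+y\partial_{y}(-y^{n-1}\partial_{x}a)\bigr)+(n-1)y^{n-1}\cdot y^{n-1}\partial_{x}a .
\]
Expanding each term:
\begin{itemize}
\item $\partial_{x}(y^{n}\partial_{y}a)=y^{n}\partial_{x}\partial_{y}a$;
\item $y\partial_{y}(-y^{n-1}\partial_{x}a)=-(n-1)y^{n-1}\partial_{x}a-y^{n}\partial_{y}\partial_{x}a$.
\end{itemize}
The mixed second derivatives cancel because $\partial_{x}\partial_{y}=\partial_{y}\partial_{x}$ on polynomials. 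The remaining terms are
\[
-(n-1)y^{2n-2}\partial_{x}a+(n-1)y^{2n-2}\partial_{x}a=0 .
\]
The only point needing care is the sign bookkeeping in the $-(n-1)y^{n-1}a^{2}$ term. It is exactly this twisting term that cancels the extra contribution coming from $y\partial_{y}$ acting on $y^{n-1}$.

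Finally, we tie this to the logarithmic complex (\ref{(5)}). The maps $\partial^{0},\partial^{1},\partial^{2}$ are isomorphisms of $\mathcal{A}$-modules, since $\delta^{1},\delta^{2}$ form a basis of $Der_{\mathcal{A}}(log\mathcal{I})$ and $\delta^{1}\wedge\delta^{2}$ a basis of $\wedge^{2}Der_{\mathcal{A}}(log\mathcal{I})$. Lemma \ref{41} gives $\partial^{1}\circ d^{1}=d^{1}_{\tilde{H}}\circ\partial^{0}$ and $\partial^{2}\circ d^{2}=d^{2}_{\tilde{H}}\circ\partial^{1}$, so the diagram displayed before the lemma commutes. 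Hence the sequence $0\to\mathcal{A}\to\mathcal{A}\times\mathcal{A}\to\mathcal{A}\to 0$ is isomorphic, as a cochain complex, to $(\textbf{C}^{*},d^{*}_{\tilde{H}})$. It is therefore the logarithmic cochain complex, and its cohomology computes $H^{*}_{log}$.
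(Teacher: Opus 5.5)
Your proposal is correct and takes the same route as the paper: the paper simply asserts $d^{i+1}\circ d^{i}=0$ for the explicit differentials i) and ii). Your expansion of $d^{2}(d^{1}a)$ supplies the computation the paper omits, and your identification with $(\textbf{C}^{*},d^{*}_{\tilde{H}})$ via $\partial^{0},\partial^{1},\partial^{2}$ and Lemma \ref{41} matches the commutative diagram the paper sets up just before the lemma.
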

     \begin{proof}
For all $a\in \mathcal{A}$, we have $ d^{i+1}\circ  d^{i}(a)=0.$
\end{proof}
According to the above cochain complex, we define the logarithmic Poisson cohomology as follows:
 \begin{definition}
The
             $(i-1)^{th}$  logarithmic Poisson cohomology group of  the logarithmic Poisson algebra $\tilde{\mathcal{P}}=(\mathcal{A};\{.,.\}, \tilde{H})$ along $\mathcal{I}$ is $H_{log}^{i-1}(\tilde{\mathcal{P}}) = \dfrac{Z^{i}(\textbf{C}^{*},d^{*})}{B^{i}(\textbf{C}^{*},d^{*})}$ 
        where
          $Z^{i}(\textbf{C}^{*},d^{*}) = Kerd^{i}$ is the space of Poisson cocycles of order $i$ and  $B^{i}(\textbf{C}^{*},d^{*}) = Imd^{i-1}$ is the space of
                   Poisson coboundary of order  $i$ for all $i\geqslant 1$.
            \end{definition}  
In the following, denote by $\tilde{\mathcal{P}} = (\mathcal{A}=\mathbb{F}[x,y];\{x,y\} = y^n; \tilde{H})$ the logarithmic Poisson algebra associated to a degenerate Poisson bivector $\pi=y^n\partial _{x} \wedge \partial _{y}$ and $\tilde{H}$ denote the logarithmic Hamiltonian map.
\section{Logarithmic Poisson cohomology of $\tilde{\mathcal{P}}$ along $\mathcal{I}$}
             \subsection{Computation of $H_{log}^{0}(\tilde{\mathcal{P}})$}
             \begin{proposition} The  $0^{th}$  logarithmic Poisson cohomology group of $\tilde{\mathcal{P}}$ along $\mathcal{I}$ is
             $H_{log}^{0}(\tilde{\mathcal{P}}) \simeq \mathbb{F}$.
             \end{proposition}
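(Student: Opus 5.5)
By the definition of the logarithmic Poisson cohomology together with Lemma \ref{43}, and since $d^{0}=0$, we have $B^{1}=\operatorname{Im} d^{0}=0$. Hence $H_{log}^{0}(\tilde{\mathcal{P}}) = Z^{1}/B^{1} \simeq \operatorname{Ker} d^{1}$. The plan is therefore to compute the kernel of $d^{1}(a) = y^{n-1}(y\partial_{y}a, -\partial_{x}a)$ directly and show that it equals the constants $\mathbb{F}\subset\mathcal{A}$.

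First, if $a\in\operatorname{Ker} d^{1}$, then both $y^{n}\partial_{y}a=0$ and $y^{n-1}\partial_{x}a=0$ hold in $\mathcal{A}=\mathbb{F}[x,y]$. Since $\mathcal{A}$ is an integral domain and $y^{n}\neq 0$, $y^{n-1}\neq0$, we may cancel these factors and obtain $\partial_{y}a=0$ and $\partial_{x}a=0$.

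Next, write $a=\sum_{i,j} a_{ij}x^{i}y^{j}$. The condition $\partial_{x}a=0$ gives $i\,a_{ij}=0$ for all $i\geq 1$, and $\partial_{y}a=0$ gives $j\,a_{ij}=0$ for all $j\geq1$. Because $\mathbb{F}$ has characteristic zero, the integers $i$ and $j$ are invertible in $\mathbb{F}$, so every $a_{ij}$ with $(i,j)\neq(0,0)$ vanishes. Thus $a=a_{00}\in\mathbb{F}$.

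Conversely, $d^{1}$ clearly kills the constants. Therefore $\operatorname{Ker} d^{1}=\mathbb{F}$, and $H_{log}^{0}(\tilde{\mathcal{P}})\simeq\mathbb{F}$.

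The only point needing care is the use of characteristic zero. In positive characteristic $p$, elements such as $x^{p}$ would also be killed by $d^{1}$, and the statement would fail. The cancellation of the powers of $y$ is harmless because $\mathbb{F}[x,y]$ has no zero divisors.
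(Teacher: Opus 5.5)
Your proposal is correct and follows the paper's own argument. You use $B^{1}=\mathrm{Im}\,d^{0}=0$ and $\ker d^{1}=\mathbb{F}$, since $d^{1}(a)=0$ forces $\partial_{x}a=\partial_{y}a=0$. You simply make explicit the two steps the paper leaves implicit: cancelling the powers of $y$ in the integral domain $\mathbb{F}[x,y]$, and using characteristic zero to conclude that $a$ is constant.
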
            
\begin{proof}
Let $a\in \mathcal{A}\cap Z^{1}(\textbf{C}^{*},d^{*})$. Thus $d^{1}(a)=0$ if and only if $\partial_{x}a=\partial_{y}a=0$. It follows that $a\in \mathbb{F}$.  Therefore $Z^{1}(\textbf{C}^{*},d^{*})=\mathbb{F}$. Since $Z^{1}(\textbf{C}^{*},d^{*})= B^{1}(\textbf{C}^{*},d^{*})\oplus H_{log}^{0}(\tilde{\mathcal{P}})$ and $B^{1}(\textbf{C}^{*},d^{*})=0$, we deduce that $H_{log}^{0}(\tilde{\mathcal{P}})=\mathbb{F}$. It is the center of the logarithmic Poisson algebra $\tilde{\mathcal{P}}$.
\end{proof}
             \subsection{Computation of $H_{log}^{1}(\tilde{\mathcal{P}})$}
             \begin{lemma} $\label{(3)}$
             Let $\mu: E\longrightarrow F $ a monomorphism of vector spaces. Then for any $p$ linear  subspaces $E_{i}$ of $E$,
          $\mu(E_{1}\oplus...\oplus E_{p})= \mu (E_{1})\oplus...\oplus\mu ( E_{p})$.
             \end{lemma}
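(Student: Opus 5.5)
The statement is pure linear algebra: an injective linear map carries a direct sum of subspaces onto the direct sum of their images. The proof has two parts. First, $\mu(E_1\oplus\cdots\oplus E_p)=\mu(E_1)+\cdots+\mu(E_p)$ as subspaces of $F$. Second, the sum on the right is direct.

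For the first part, take $v\in E_1\oplus\cdots\oplus E_p$ and write $v=v_1+\cdots+v_p$ with $v_i\in E_i$. By linearity, $\mu(v)=\mu(v_1)+\cdots+\mu(v_p)\in\mu(E_1)+\cdots+\mu(E_p)$. Conversely, any element $\mu(v_1)+\cdots+\mu(v_p)$ of the right-hand side equals $\mu(v_1+\cdots+v_p)$, which lies in $\mu(E_1\oplus\cdots\oplus E_p)$. This gives equality of the two subspaces, and it uses only linearity of $\mu$.

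For the second part, I will use the standard criterion: a sum of subspaces is direct if and only if $0$ has a unique decomposition. Suppose $\mu(v_1)+\cdots+\mu(v_p)=0$ with $v_i\in E_i$. Then $\mu(v_1+\cdots+v_p)=0$. Since $\mu$ is a monomorphism, $\ker\mu=0$, so $v_1+\cdots+v_p=0$. Because the $E_i$ form a direct sum, each $v_i=0$, and hence each $\mu(v_i)=0$. So the sum $\mu(E_1)+\cdots+\mu(E_p)$ is direct, and the equality $\mu(E_1\oplus\cdots\oplus E_p)=\mu(E_1)\oplus\cdots\oplus\mu(E_p)$ follows.

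There is no real obstacle here. The one point to handle with care is that injectivity is exactly what makes the image sum direct; without it, the first part still holds but the second can fail. The lemma will be applied to the injective map $\mu(b)=\left(\int((n-1)b-y\partial_y b)\,dx,\,b\right)$ and the decomposition $\bigoplus_{i=0}^{n-2}y^i\mathbb{F}[x]$. That $\mu$ is injective there is immediate, since its second component is $b$ itself.
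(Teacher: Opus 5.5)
Your proof is correct and complete. Linearity of $\mu$ gives $\mu(E_1\oplus\cdots\oplus E_p)=\mu(E_1)+\cdots+\mu(E_p)$, and injectivity together with the directness of the $E_i$ makes the image sum direct. The paper states this lemma without proof, treating it as a standard linear-algebra fact, so your argument supplies exactly the justification it omits.

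One small addition concerns the later application. For $\mu$ to be linear at all, $\int\cdot\,dx$ must be a fixed linear choice of antiderivative, for example the one with no $x$-free term.
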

          In the following, we determine the Poisson cocycle of order 2 noted $Z^{2}(\textbf{C}^{*},d^{*}) $ given  by.
             \begin{lemma}
              $Z^{2}(\textbf{C}^{*},d^{*}) = \mu (\overset{n-2}{\underset{i=0}{\bigoplus}}y^{j} \mathbb{F}[x] )\oplus \mu(y^{n-1} \mathbb{F}[x,y] )\oplus(\mathbb{F}_{n-1}[y]\times 0 )\oplus(y^{n} \mathbb{F}[y]\times 0 )$.
             \end{lemma}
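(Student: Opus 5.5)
The plan is to compute $Z^{2}(\textbf{C}^{*},d^{*})=Ker\, d^{2}$ directly from the explicit formula for $d^{2}$ given before Lemma \ref{43}, and then to decompose the resulting space using Lemma \ref{(3)}. Since $y^{n-1}\neq 0$ and $\mathcal{A}=\mathbb{F}[x,y]$ is an integral domain, for $\overrightarrow{a}=(a^{1},a^{2})\in\mathcal{A}\times\mathcal{A}$ we have $d^{2}(\overrightarrow{a})=0$ if and only if
\begin{equation*}
\partial_{x}a^{1}=(n-1)a^{2}-y\partial_{y}a^{2}.
\end{equation*}
So the second component $b:=a^{2}$ can be chosen freely, and the cocycle condition only constrains $\partial_{x}a^{1}$.

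First I fix the antiderivative convention. For $c\in\mathcal{A}$, $\int c\,dx$ denotes the unique antiderivative in $x$ with no term of $x$-degree zero. With this choice, $c\mapsto\int c\,dx$ is $\mathbb{F}$-linear, and so is $\mu$. Integrating the displayed equation in $x$ then gives
\begin{equation*}
a^{1}=\int\big((n-1)b-y\partial_{y}b\big)dx+c(y),\qquad c\in\mathbb{F}[y],
\end{equation*}
because the kernel of $\partial_{x}$ on $\mathcal{A}$ is $\mathbb{F}[y]$. Hence $\overrightarrow{a}=\mu(b)+(c,0)$. Conversely, every such pair is a cocycle. This proves $Z^{2}=\mu(\mathcal{A})+(\mathbb{F}[y]\times 0)$.

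Next I check that this sum is direct and that $\mu$ is a monomorphism. The second coordinate of $\mu(b)$ is $b$ itself, so $\mu(b)=0$ forces $b=0$. For the same reason, $\mu(b)=(c,0)$ forces $b=0$ and hence $c=0$. Therefore $Z^{2}=\mu(\mathcal{A})\oplus(\mathbb{F}[y]\times 0)$.

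Finally I split each summand. As $\mathbb{F}$-vector spaces,
\begin{equation*}
\mathcal{A}=\Big(\bigoplus_{i=0}^{n-2}y^{i}\mathbb{F}[x]\Big)\oplus y^{n-1}\mathbb{F}[x,y],
\end{equation*}
which is the grouping of monomials $x^{k}y^{i}$ according to whether $i\le n-2$ or $i\ge n-1$. Likewise $\mathbb{F}[y]=\mathbb{F}_{n-1}[y]\oplus y^{n}\mathbb{F}[y]$, where $\mathbb{F}_{n-1}[y]$ is read as polynomials of degree $\le n-1$. Applying Lemma \ref{(3)} to the monomorphism $\mu$ carries the first splitting into $\mu(\bigoplus_{i=0}^{n-2} y^{i}\mathbb{F}[x])\oplus\mu(y^{n-1}\mathbb{F}[x,y])$. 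The second splitting transfers trivially to $\mathbb{F}[y]\times 0$, and together these give the claimed formula.

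The only delicate point is the linearity and well-definedness of $\mu$, which is handled by the normalization of $\int$ above; everything else is bookkeeping. This decomposition is arranged so that, in the subsequent computation of $H^{1}_{log}$, the parts $\mu(y^{n-1}\mathbb{F}[x,y])$ and $y^{n}\mathbb{F}[y]\times 0$ can be matched against $Im\, d^{1}$.
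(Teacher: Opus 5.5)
Your proof is correct and follows essentially the same route as the paper: you solve $\partial_x a^1=(n-1)a^2-y\partial_y a^2$, write $Z^2=\mu(\mathcal{A})\oplus(\mathbb{F}[y]\times 0)$, and split both summands using Lemma \ref{(3)}. You also make explicit two points the paper leaves implicit: the normalization of $\int\cdot\,dx$ that makes $\mu$ linear, and the injectivity of $\mu$ and directness of the sum, both read off from the second coordinate.
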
            
            \begin{proof}
            Let $(a,b)\in \mathcal{A}$. We have $(a,b)\in Z^{2}(\textbf{C}^{*},d^{*})$ if and only if $y^{n-1}\left(\partial_{x}a + y\partial_{y}b -(n-1)b\right)=0$. Therefore $\partial_{x}a = (n-1)b - y\partial_{y}b$. It follows that $ a=\int \left((n-1)b - y\partial_{y}b\right)dx + \alpha(y)$. We consider the following   $\mathbb{F}$-linear map
            \begin{equation}
            \mu : \mathcal{A} \longrightarrow \mathcal{A}\times \mathcal{A}, b\mapsto \left(\int ((n-1)b - y\partial_{y}b)dx; b\right).
            \end{equation}
            This implies that $ Z^{2}(\textbf{C}^{*},d^{*})= \{ \left(\int ((n-1)b - y\partial_{y}b)dx; b\right) + (\alpha(y),0); b\in \mathcal{A}, \alpha(y)\in \mathbb{F}[y] \}$ for all $n>1$.
           We remember that $\mathcal{A} =  \overset{n-2}{\underset{i=0}{\bigoplus}}y^{j} \mathbb{F}[x] \oplus y^{n-1} \mathbb{F}[x,y]$.
          According to $lemma$ $\ref{(3)}$, we deduce the explicit expression of  $Z^{2}(\textbf{C}^{*},d^{*})$ by:
            \begin{align*}
             Z^{2}(\textbf{C}^{*},d^{*}) &\ = \mu (\mathcal{A} )\oplus ( \mathbb{F}[y]\times 0)\\
             &\ = \mu \left(\overset{n-2}{\underset{i=0}{\bigoplus}}y^{j} \mathbb{F}[x] \oplus y^{n-1} \mathbb{F}[x,y] \right)\oplus ( \mathbb{F}[y]\times 0)\\
             &\ = \mu \left(\overset{n-2}{\underset{i=0}{\bigoplus}}y^{j} \mathbb{F}[x] \right)\oplus \mu\left(y^{n-1} \mathbb{F}[x,y] \right)\oplus(\mathbb{F}_{n-1}[y]\times 0 )\oplus(y^{n} \mathbb{F}[y]\times 0 ).\\
             \end{align*}
    This completes the proof.
            \end{proof}
In the following, we investigate the relation  that exists between $Z^{2}(\textbf{C}^{*},d^{*})$ and $B^{2}(\textbf{C}^{*},d^{*})$. 
\begin{lemma}  $\label{L5}$
For all $n>1$,
              $B^{2}(\textbf{C}^{*},d^{*})\cap (\overset{n-2}{\underset{i=0}{\bigoplus}}y^{j} \mathbb{F}[x]) = 0$ and also $B^{2}(\textbf{C}^{*},d^{*})\cap (\mathbb{F}_{n-1}[y]\times 0) = 0$.
             \end{lemma}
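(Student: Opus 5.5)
The plan is to use only the explicit form of the coboundary operator $d^{1}$ and a divisibility argument in the variable $y$. By definition $B^{2}(\textbf{C}^{*},d^{*})=\operatorname{Im} d^{1}$, and by i) above every coboundary has the form
$$d^{1}(f)=\bigl(y^{n}\partial_{y}f,\,-y^{n-1}\partial_{x}f\bigr),\qquad f\in\mathcal{A}.$$
The key observation is that the first component of a coboundary is divisible by $y^{n}$ and the second by $y^{n-1}$. I read the first intersection in the statement as $B^{2}(\textbf{C}^{*},d^{*})\cap\mu\bigl(\overset{n-2}{\underset{j=0}{\bigoplus}}y^{j}\mathbb{F}[x]\bigr)$, since this is the summand of $Z^{2}(\textbf{C}^{*},d^{*})$ in the preceding lemma and the one that is relevant for $H^{1}_{log}$.

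\emph{First intersection.} Take $b\in\overset{n-2}{\underset{j=0}{\bigoplus}}y^{j}\mathbb{F}[x]$ and suppose $\mu(b)=d^{1}(f)$. Comparing second components gives $b=-y^{n-1}\partial_{x}f$. The left side has $y$-degree at most $n-2$, while every nonzero term on the right is divisible by $y^{n-1}$. Hence $b=0$, and therefore $\mu(b)=\mu(0)=0$. In passing this shows that $\mu$ is injective, because its second component is the identity. This is exactly what is needed to apply Lemma \ref{(3)}.

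\emph{Second intersection.} Suppose $(\alpha(y),0)=d^{1}(f)$ with $\alpha\in\mathbb{F}_{n-1}[y]$. The second component gives $y^{n-1}\partial_{x}f=0$. Since $\mathcal{A}$ is a domain, $\partial_{x}f=0$, and as $\mathbb{F}$ has characteristic zero, $f\in\mathbb{F}[y]$. The first component then reads $\alpha(y)=y^{n}f'(y)$. The right side is either $0$ or has $y$-adic order at least $n$, whereas $\deg\alpha\le n-1$. Hence $\alpha=0$.

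The only delicate point is not computational but the interpretation of the first intersection: taken literally, $\overset{n-2}{\underset{j=0}{\bigoplus}}y^{j}\mathbb{F}[x]$ is not a subspace of $\mathcal{A}\times\mathcal{A}$. I would state explicitly that the intersection is taken with its image under $\mu$, or equivalently with the pairs whose second component lies in that subspace. The divisibility argument covers both readings. The characteristic-zero hypothesis is used only to conclude $f\in\mathbb{F}[y]$ from $\partial_{x}f=0$.
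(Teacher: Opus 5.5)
Your argument is correct under the reading $B^{2}\cap\mu(V)$, with $V=\bigoplus_{j=0}^{n-2}y^{j}\mathbb{F}[x]$, but it is organised differently from the paper's proof. The paper uses one observation for both intersections: the first component $y^{n}\partial_{y}a$ of any coboundary lies in $y^{n}\mathcal{A}$. The paper phrases this as its ``degree in $y$ is greater than $n-1$''; what is meant, and what you correctly use, is $y$-adic order.

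For $\mathbb{F}_{n-1}[y]\times 0$ this observation settles things at once. The equation $\alpha=y^{n}\partial_{y}f$ forces $\alpha=0$ for every $f\in\mathcal{A}$, so your detour through $\partial_{x}f=0$ and characteristic zero is unnecessary.

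For $\mu(V)$, the first-component route needs a computation the paper leaves implicit. For $b=\sum_{j\le n-2}y^{j}b_{j}(x)$, the first component of $\mu(b)$ is $\sum_{j}(n-1-j)y^{j}\int b_{j}\,dx$. It has $y$-degree at most $n-2$, and it vanishes only if $b=0$, because $n-1-j\neq 0$. Your use of the second component, on which $\mu$ is the identity, avoids this computation entirely.

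Your route also gives something the paper needs but does not state. Combining your two steps shows $B^{2}\cap\bigl(\mu(V)\oplus(\mathbb{F}_{n-1}[y]\times 0)\bigr)=0$: the second component kills $b$, then the first kills $\alpha$. This is what the decomposition of $Z^{2}$ in the next proposition actually requires. It does not follow formally from the two separate intersections being zero.

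One correction: your closing claim is false. Intersecting with $\mu(V)$ is not ``equivalent'' to intersecting with the pairs whose second component lies in $V$, and your argument does not cover both readings. The set $\mathcal{A}\times V$ is much larger than $\mu(V)$. For instance, $d^{1}(y)=(y^{n},0)$ is a coboundary whose second component is $0\in V$. In fact $B^{2}\cap(\mathcal{A}\times V)=y^{n}\mathbb{F}[y]\times 0\neq 0$.

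Drop that remark. If you want an alternative reading, $V\times 0$ is closer to the paper's proof, and it is handled by the same one-line first-component argument as $\mathbb{F}_{n-1}[y]\times 0$.
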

\begin{proof}
Remember that $d^{2}\circ d^{1}=0$ implies that $B^{2}(\textbf{C}^{*},d^{*})\subset Z^{2}(\textbf{C}^{*},d^{*})$.
Furthermore, for all $a\in \mathcal{A}$ $d^{1}(a) = (\psi^{1},\psi^{2})$ where 
 the degree of $\psi^{1}$ in $y$ is greater  than $(n-1)$. Therefore   $B^{2}(\textbf{C}^{*},d^{*})\cap (\overset{n-2}{\underset{i=0}{\bigoplus}}y^{j} \mathbb{F}[x]) = 0$ and also $B^{2}(\textbf{C}^{*},d^{*})\cap (\mathbb{F}_{n-1}[y]\times 0) = 0$.
\end{proof}    
            \begin{lemma}$\label{L6}$         
\begin{enumerate}
\item[1.] $B^{2}(\textbf{C}^{*},d^{*}) \cap( y^{n} \mathbb{F}[y]\times 0 )= y^{n} \mathbb{F}[y]\times 0$,
\item[2.] $B^{2}(\textbf{C}^{*},d^{*})\cap \mu\left(y^{n-1} \mathbb{F}[x,y]\right)  =  \mu\left(y^{n-1} \mathbb{F}[x,y]\right)$.
\end{enumerate}
            \end{lemma}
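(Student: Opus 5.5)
The plan is to prove both equalities by showing the inclusions $y^{n}\mathbb{F}[y]\times 0\subset B^{2}(\textbf{C}^{*},d^{*})$ and $\mu\left(y^{n-1}\mathbb{F}[x,y]\right)\subset B^{2}(\textbf{C}^{*},d^{*})$. The reverse inclusions for the intersections are trivial. Recall that $B^{2}(\textbf{C}^{*},d^{*})=\mathrm{Im}\,d^{1}$ with $d^{1}(a)=\left(y^{n}\partial_{y}a,\,-y^{n-1}\partial_{x}a\right)$. So in each case we exhibit an explicit preimage $a\in\mathcal{A}$.

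For item 1, take $(y^{n}p(y),0)$ with $p\in\mathbb{F}[y]$. Set $a=\int p(y)\,dy$, a polynomial in $y$ alone; this is possible since $\mathbb{F}$ has characteristic zero. Then $\partial_{x}a=0$ and $y^{n}\partial_{y}a=y^{n}p$, hence $d^{1}(a)=(y^{n}p,0)$.

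For item 2, take $b=y^{n-1}c$ with $c\in\mathbb{F}[x,y]$. The second component of $d^{1}(a)$ must equal $b$. This forces $-y^{n-1}\partial_{x}a=y^{n-1}c$, so we choose $a=-\int c\,dx$. The first component is then $y^{n}\partial_{y}a=-y^{n}\int\partial_{y}c\,dx$. On the other hand, $y\partial_{y}b=(n-1)y^{n-1}c+y^{n}\partial_{y}c$, which gives
\begin{equation*}
\int\left((n-1)b-y\partial_{y}b\right)dx=\int\left(-y^{n}\partial_{y}c\right)dx=-y^{n}\int\partial_{y}c\,dx .
\end{equation*}
The two first components agree, so $d^{1}(a)=\mu(b)$.

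The only delicate point is the meaning of the antiderivative $\int\,\cdot\,dx$ used in the definition of $\mu$. We fix it as the $\mathbb{F}[y]$-linear map $x^{k}\mapsto x^{k+1}/(k+1)$, i.e. the antiderivative vanishing at $x=0$. This normalization is what makes $\mu$ a well-defined linear map. It also guarantees that $\int\,\cdot\,dx$ commutes with $\partial_{y}$ and with multiplication by powers of $y$, which is exactly what the computation above uses. If a different normalization were chosen, any discrepancy would be a function of $y$ alone divisible by $y^{n}$. By item 1 such a term already lies in $B^{2}(\textbf{C}^{*},d^{*})$, so the conclusion would still hold.
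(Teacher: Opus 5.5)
Your proof is correct and follows the paper's argument: in both items you exhibit the explicit $d^{1}$-preimages $a=\int p(y)\,dy$ and $a=-\int c\,dx$, exactly as the paper does. Your first component $-y^{n}\int\partial_{y}c\,dx$ of $\mu(y^{n-1}c)$ also corrects a sign slip in the paper's displayed equation.

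Making the $\mathbb{F}[y]$-linear normalization of $\int\cdot\,dx$ explicit is a worthwhile addition that the paper leaves implicit. Note, however, that your closing robustness remark requires the alternative normalization to be $\mathbb{F}[y]$-linear as well. Otherwise the discrepancy need not be divisible by $y^{n}$, and item 2 can genuinely fail.
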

   \begin{proof} $\label{PP}$\\
\begin{enumerate}
\item[1.]     We will  show that $ (y^{n} \mathbb{F}[y]\times 0 ) \subset B^{2}(\textbf{C}^{*},d^{*})$. Let us $\overrightarrow{b}=(y^n b_{1}(y), 0) \in y^{n} \mathbb{F}[y]\times 0$. We are looking for an element $a\in \mathcal{A}$ such that $d^1(a)= \overrightarrow{b}$. We have $d^1(a)= \overrightarrow{b}$ equivalent to $\partial_{y}a = \partial_{y}b_{1}(y) $ and $\partial_{x}a = 0$ which imply  that $a = \int b_{1}(y)dy + \alpha(x)$ and $a\in  \mathbb{F}[y]$ respectively. We get  $a = \int b_{1}(y)dy + \alpha_{0}$ and we verify that we have $d^1\left(\int b_{1}(y)dy + \alpha_{0}\right)= \overrightarrow{b}$, for all $\alpha_{0}\in \mathbb{F}$. Hence we just prove that $\overrightarrow{b}\in B^{2}(\textbf{C}^{*},d^{*})$. Therefore $y^{n} \mathbb{F}[y]\times 0  \subset B^{2}(\textbf{C}^{*},d^{*})$. Then $    B^{2}(\textbf{C}^{*},d^{*}) \cap( y^{n} \mathbb{F}[y]\times 0 )= y^{n} \mathbb{F}[y]\times 0 $. 
 \item[2.] We need to prove that $\mu(y^{n-1} \mathbb{F}[x,y] )\subset  B^{2}(\textbf{C}^{*},d^{*})$. So we take $ g = \mu(y^{n-1}b)$ an element of $ \mu\left(y^{n-1}\mathbb{F}[x,y]\right) $. For all $f\in \mathcal{A}$, $d^1(f)=g $ if and only if we have equation $y^{n-1}(y\partial_{y}f, -\partial_{x}f) = \left( \int y^{n}\partial_{y}bdx, y^{n-1}b\right)$, which means that $f=-\int bdx + \beta_{0}$. Furthermore, there exists $f=-\int bdx + \beta_{0}\in \mathcal{A}$ with $\beta_{0}\in \mathbb{F}$ such that $d^1(f)=g$. Therefore we deduce that $g\in B^{2}(\textbf{C}^{*},d^{*})$. Thus $\mu(y^{n-1} \mathbb{F}[x,y] )\subset  B^{2}(\textbf{C}^{*},d^{*})$, this completes exactly the proof of $B^{2}(\textbf{C}^{*},d^{*}) \cap \mu(y^{n-1} \mathbb{F}[x,y] )  =  \mu(y^{n-1} \mathbb{F}[x,y] )$.
\end{enumerate}
This completes the proof.
   \end{proof}
  
  \begin{proposition} 
  The  $1^{th}$  logarithmic Poisson cohomology group of $\tilde{\mathcal{P}}$ along $\mathcal{I}$ is given by
  $H_{log}^{1}(\tilde{\mathcal{P}}) \simeq  \mu (\overset{n-2}{\underset{i=0}{\bigoplus}}y^{i} \mathbb{F}[x])\oplus ( \mathbb{F}_{n-1}[y]\times 0)$ for all $n>1$.
    \end{proposition}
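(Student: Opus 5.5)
The plan is to compute $H_{log}^{1}(\tilde{\mathcal{P}})=Z^{2}(\textbf{C}^{*},d^{*})/B^{2}(\textbf{C}^{*},d^{*})$ directly from the decomposition of $Z^{2}$ obtained in the previous lemma,
$$Z^{2}(\textbf{C}^{*},d^{*}) = \mu \Big(\bigoplus_{i=0}^{n-2}y^{i} \mathbb{F}[x] \Big)\oplus \mu\big(y^{n-1} \mathbb{F}[x,y] \big)\oplus(\mathbb{F}_{n-1}[y]\times 0 )\oplus(y^{n} \mathbb{F}[y]\times 0 ).$$
I will call the first and third summands $U$ and the second and fourth $V$, so that $Z^{2}=U\oplus V$. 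The argument reduces to showing $B^{2}=V$. Then the quotient map restricts to an isomorphism $U\to Z^{2}/B^{2}$, which is the claimed formula.

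The inclusion $V\subset B^{2}$ is exactly Lemma \ref{L6}. For $\mu(y^{n-1}b)$, the function $f=-\int b\,dx$ is a primitive. For $(y^{n}b_{1}(y),0)$, a primitive is $\int b_1\,dy$. I would double-check the sign and normalisation in the first computation, using $d^{1}(f)=y^{n-1}(y\partial_y f,-\partial_x f)$.

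For the reverse inclusion $B^{2}\subset V$, I would compute $B^{2}$ explicitly rather than rely only on Lemma \ref{L5}. The point is that trivial intersection of $B^2$ with $U$ does not by itself give $B^{2}\subset V$. Take $d^{1}(a)=(y^{n}\partial_y a,\,-y^{n-1}\partial_x a)$. Its second component is $b=-y^{n-1}\partial_x a\in y^{n-1}\mathbb{F}[x,y]$.

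Next I compare $d^{1}(a)$ with $\mu(b)$. Since both lie in $Z^{2}$ and have the same second component, their difference has the form $(\alpha(y),0)$. I expect $\alpha(y)$ to be the $x$-independent part of $y^{n}\partial_y a$ relative to the normalisation of $\int\cdot\,dx$, for example fixing the antiderivative to have zero constant term in $x$. Hence $\alpha\in y^{n}\mathbb{F}[y]$, and $d^{1}(a)=\mu(b)+(\alpha,0)\in V$.

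The main obstacle is this normalisation. The map $\mu$ is only well defined once one fixes which antiderivative $\int\cdot\,dx$ denotes. I would fix it to vanish at $x=0$; this makes $\mu$ $\mathbb{F}$-linear and injective, so Lemma \ref{(3)} applies. I then need to check that, with this choice, the leftover term $\alpha(y)$ really lies in $y^{n}\mathbb{F}[y]$ and never has components in $\mathbb{F}_{n-1}[y]$. This holds because $y^{n}\partial_y a$ is divisible by $y^{n}$, and evaluating a polynomial at $x=0$ preserves divisibility by $y^n$.

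Once $B^{2}=V$ is established, the Lemma \ref{L5} statements follow as a consistency check ($B^2\cap U=0$). The isomorphism
$$H_{log}^{1}(\tilde{\mathcal{P}})\simeq \mu\Big(\bigoplus_{i=0}^{n-2}y^{i}\mathbb{F}[x]\Big)\oplus(\mathbb{F}_{n-1}[y]\times 0)$$
is then induced by the restriction of the projection $Z^{2}\to Z^{2}/B^{2}$ to $U$.
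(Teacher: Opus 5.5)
Your proposal is correct, and its skeleton is the paper's. Both use the splitting $Z^{2}=U\oplus V$ from the preceding lemma, with Lemma \ref{L6} supplying $V\subset B^{2}$. Your primitive $f=-\int b\,dx$ is right; the paper's display of $\mu(y^{n-1}b)$ has a sign slip, and the true first component is $-y^{n}\int\partial_{y}b\,dx$.

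The difference is in the other half. The paper relies on Lemma \ref{L5}, which only shows that $B^{2}$ meets each of the two summands of $U$ separately in $0$. It uses nothing but the $y^{n}$-divisibility of the first component of $d^{1}(a)$. Two separate trivial intersections do not give $B^{2}\cap U=0$, and that is what the final identity $Z^{2}=U\oplus B^{2}$ actually requires.

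You instead prove $B^{2}\subset V$ directly. You note that $d^{1}(a)-\mu(-y^{n-1}\partial_{x}a)$ is an $x$-independent pair $(\alpha(y),0)$. Evaluating at $x=0$, with $\int\cdot\,dx$ normalised to vanish there, gives $\alpha(y)=y^{n}(\partial_{y}a)(0,y)\in y^{n}\mathbb{F}[y]$. This also yields the explicit description $B^{2}=\mu(y^{n-1}\mathbb{F}[x,y])\oplus(y^{n}\mathbb{F}[y]\times 0)$, which the paper never states.

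One small correction to your framing. Since $V\subset B^{2}\subset U\oplus V$, the modular law makes $B^{2}\subset V$ equivalent to $B^{2}\cap U=0$. So trivial intersection with all of $U$ would have sufficed once Lemma \ref{L6} is in hand. The real defect in the paper is that it never treats the sum $U$. Your computation closes it, together with your explicit normalisation of $\mu$, which the paper leaves implicit but which is needed for $\mu$ to be linear and for the split $\mathbb{F}_{n-1}[y]\oplus y^{n}\mathbb{F}[y]$ to match $B^{2}$.
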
 
\begin{proof}
It come from  the above $lemma$ $\ref{L5}$ and $lemma$ $\ref{L5}$ that we have the following:\\
 $Z^{2}(\textbf{C}^{*},d^{*})  = \mu (\overset{n-2}{\underset{i=0}{\bigoplus}}y^{j} \mathbb{F}[x] )\oplus  (\mathbb{F}_{n-1}[y]\times 0 )\oplus B^{2}(\textbf{C}^{*},d^{*})\simeq H_{log}^{1}(\tilde{\mathcal{P}}) \oplus B^{2}(\textbf{C}^{*},d^{*}).$ 
 This completes the proof.
\end{proof} 
  
 \subsection{Computation of $H_{log}^{2}(\tilde{\mathcal{P}})$}
 \begin{lemma} For all $n>1$,
$\mathcal{A}= B^{3}(\textbf{C}^{*},d^{*})\oplus \overset{n-2}{\underset{i=0}{\bigoplus}}y^{i} \mathbb{F}[x]$.
 \end{lemma}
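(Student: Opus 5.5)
We have to show two things: every polynomial splits as a coboundary plus a polynomial of $y$-degree at most $n-2$, and that low-degree part is unique. Here $B^{3}(\textbf{C}^{*},d^{*})=\operatorname{Im} d^{2}$, where
$$d^{2}(a^{1},a^{2})=y^{n-1}\bigl(\partial_{x}a^{1}+y\partial_{y}a^{2}-(n-1)a^{2}\bigr).$$

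We first show $B^{3}(\textbf{C}^{*},d^{*})\subset y^{n-1}\mathcal{A}$. This is immediate from the formula, since $d^{2}$ visibly factors through multiplication by $y^{n-1}$.

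Next we show the reverse inclusion $y^{n-1}\mathcal{A}\subset B^{3}(\textbf{C}^{*},d^{*})$. Take $y^{n-1}c$ with $c\in\mathcal{A}$ and try $a^{2}=0$, $a^{1}=\int c\,dx$. Then $d^{2}(a^{1},0)=y^{n-1}\partial_{x}a^{1}=y^{n-1}c$, so $\operatorname{Im}d^{2}\supseteq y^{n-1}\mathcal{A}$. The antiderivative in $x$ always exists in $\mathbb{F}[x,y]$ because $\mathbb{F}$ has characteristic zero. Combining the two inclusions gives $B^{3}(\textbf{C}^{*},d^{*})=y^{n-1}\mathcal{A}=y^{n-1}\mathbb{F}[x,y]$. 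The $a^{2}$ part plays no role here, because the $x$-derivative alone already attains everything.

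Finally we use the vector-space decomposition $\mathcal{A}=\overset{n-2}{\underset{i=0}{\bigoplus}}y^{i}\mathbb{F}[x]\oplus y^{n-1}\mathbb{F}[x,y]$. This decomposition was already used in computing $Z^{2}(\textbf{C}^{*},d^{*})$. It holds because every polynomial splits uniquely, by $y$-degree of its monomials, into terms of degree $\le n-2$ in $y$ and terms divisible by $y^{n-1}$. Substituting $B^{3}(\textbf{C}^{*},d^{*})=y^{n-1}\mathbb{F}[x,y]$ gives the claimed direct sum. In particular, directness amounts to $B^{3}(\textbf{C}^{*},d^{*})\cap\overset{n-2}{\underset{i=0}{\bigoplus}}y^{i}\mathbb{F}[x]=0$: a nonzero element of $y^{n-1}\mathcal{A}$ has all monomials of $y$-degree at least $n-1$.

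There is no real obstacle. The only point needing care is the inclusion $\operatorname{Im}d^{2}\subset y^{n-1}\mathcal{A}$, and the proof should check it directly from the formula for $d^{2}$ in Lemma \ref{41}, as transported by $\partial^{2}$.
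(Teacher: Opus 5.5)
Your proof is correct and follows the same route as the paper. Both identify $B^{3}(\textbf{C}^{*},d^{*})$ with $y^{n-1}\mathcal{A}$ and then use the decomposition $\mathcal{A}=\bigoplus_{i=0}^{n-2}y^{i}\mathbb{F}[x]\oplus y^{n-1}\mathbb{F}[x,y]$. The differences are cosmetic: your preimage $(\int c\,dx,\,0)$ is simpler than the paper's choice $b=f_{n-1}$, $a=\int(nb-y\partial_{y}b)\,dx$, and you prove explicitly the inclusion $d^{2}(\mathcal{A}\times\mathcal{A})\subset y^{n-1}\mathcal{A}$, and hence directness, which the paper only asserts as clear.
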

\begin{proof}
Let $f\in \mathcal{A}=\mathbb{F}[x,y]$. Decompose $f$ by $f_{0}(x)+ yf_{1}(x) +...+y^{n-2}f_{_{n-2}}(x)+y^{n-1}f_{_{n-1}}(x,y)  $  and $\mathcal{A}$ by $\mathcal{A} = \mathbb{F}[x]\oplus y\mathbb{F}[x]\oplus ...\oplus y^{n-2} \mathbb{F}[x] \oplus y^{n-1}\mathbb{F}[x,y]$.
 We have $f\in B^{3}(\textbf{C}^{*},d^{*})$ if and only if there exists $(a,b)\in \mathcal{A}^{2}$ such that $d^{2}(a,b) = f$. On the other words, $f\in B^{3}(\textbf{C}^{*},d^{*})$ if and only if there exists $(a,b)\in \mathcal{A}^{2}$ such that the following system holds: 
   \begin{eqnarray} \label{E8}
  \left\{
  \begin{array}{rl}
   \partial_{x} a + y\partial_{y}b - (n-1)b=f_{_{n-1}}(x,y)    \\
   f_{i}(x,y) = 0, ~for~ all ~i\in \{0;1;...;n-2;n\}           
  \end{array}
   \right.
  \end{eqnarray}
  It  follows from the above system that we have $a= \int \left(f_{_{n-1}}(x,y) + (n-1)b-y\partial_{y}b\right)dx + \beta(y)$. Particularly, if we take $b=f_{_{n-1}}(x,y)$ and $\beta(y)=0$, we have $a=\int \left(nb-y\partial_{y} b\right)dx$. It is clear that if the polynomial given by  $f_{0}(x)+ yf_{1}(x) +...+y^{n-2}f_{_{n-2}}(x)$ is different to zero,  it can not to be in $B^{3}(\textbf{C}^{*},d^{*})$. Then we deduce that $\mathcal{A}= d^{2}(\mathcal{A} ^{2})\oplus \overset{n-2}{\underset{i=0}{\bigoplus}}y^{i} \mathbb{F}[x]$. On the other words $\mathcal{A}= B^{3}(\textbf{C}^{*},d^{*})\oplus \overset{n-2}{\underset{i=0}{\bigoplus}}y^{i} \mathbb{F}[x]$.
\end{proof}  
  \begin{proposition}
  The  $2^{th}$  logarithmic Poisson cohomology group of $\tilde{\mathcal{P}}$ along $\mathcal{I}$ is given by
  $H_{log}^{2}(\tilde{\mathcal{P}}) \simeq \overset{n-2}{\underset{i=0}{\bigoplus}}y^{i} \mathbb{F}[x]$.
   \end{proposition}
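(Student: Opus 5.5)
By the definition of the logarithmic Poisson cohomology, $H_{log}^{2}(\tilde{\mathcal{P}}) = Z^{3}(\textbf{C}^{*},d^{*})/B^{3}(\textbf{C}^{*},d^{*})$. In the complex of Lemma \ref{43} the map $d^{3}$ is zero, so $Z^{3}(\textbf{C}^{*},d^{*}) = \operatorname{Ker} d^{3} = \mathcal{A}$. The plan is to combine this with the decomposition $\mathcal{A}= B^{3}(\textbf{C}^{*},d^{*})\oplus \overset{n-2}{\underset{i=0}{\bigoplus}}y^{i} \mathbb{F}[x]$ from the preceding lemma. This gives the canonical isomorphism
\[
H_{log}^{2}(\tilde{\mathcal{P}}) = \mathcal{A}/B^{3}(\textbf{C}^{*},d^{*}) \simeq \overset{n-2}{\underset{i=0}{\bigoplus}}y^{i} \mathbb{F}[x],
\]
induced by restricting the quotient map $\mathcal{A}\to \mathcal{A}/B^{3}$ to the complement. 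That restriction is injective because the sum is direct, and surjective because the two summands span $\mathcal{A}$.

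Since the whole weight rests on the preceding lemma, I would make its two inclusions explicit rather than cite it blindly.

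\textbf{The image lies in $y^{n-1}\mathcal{A}$.} By definition $d^{2}(a,b)=y^{n-1}\bigl(\partial_x a + y\partial_y b-(n-1)b\bigr)$. So every coboundary is divisible by $y^{n-1}$, which forces $B^{3}\cap \bigoplus_{i\le n-2}y^{i}\mathbb{F}[x]=0$.

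\textbf{Every multiple of $y^{n-1}$ is a coboundary.} Take $y^{n-1}g$ with $g\in\mathcal{A}$ arbitrary. Choose $b=0$ and $a=\int g\,dx$, the antiderivative in $x$, which exists in $\mathbb{F}[x,y]$ because $\mathrm{char}\,\mathbb{F}=0$. Then $d^{2}(a,0)=y^{n-1}\partial_x a=y^{n-1}g$. Hence $B^{3}=y^{n-1}\mathbb{F}[x,y]$.

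Together these give $\mathcal{A}=y^{n-1}\mathcal{A}\oplus\bigoplus_{i=0}^{n-2}y^{i}\mathbb{F}[x]$, which is just division by $y^{n-1}$ as a polynomial in $y$. The only mild obstacle is this surjectivity onto $y^{n-1}\mathcal{A}$. With $b=0$ it reduces to the existence of $x$-antiderivatives, so it is routine. The proposition then follows by quotienting.
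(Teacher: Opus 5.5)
Your proposal is correct and follows the paper's route: $Z^{3}(\textbf{C}^{*},d^{*})=\mathcal{A}$ because $d^{3}=0$, combined with the preceding lemma's decomposition $\mathcal{A}=B^{3}(\textbf{C}^{*},d^{*})\oplus\bigoplus_{i=0}^{n-2}y^{i}\mathbb{F}[x]$. In re-verifying that lemma you use the simpler preimage $(a,b)=(\int g\,dx,\,0)$ instead of the paper's $b=f_{n-1}$, $a=\int(nb-y\partial_{y}b)\,dx$, and you identify $B^{3}=y^{n-1}\mathcal{A}$ explicitly, which makes the directness of the sum cleaner than the paper's ``it is clear'' argument.
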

   \begin{proof}
   Since $Z^{3}(\textbf{C}^{*},d^{*})=\mathcal{A}$, it follows that  $Z^{3}(\textbf{C}^{*},d^{*})= B^{3}(\textbf{C}^{*},d^{*})\oplus \overset{n-2}{\underset{i=0}{\bigoplus}}y^{i} \mathbb{F}[x]$. Then we get $H_{log}^{2}(\tilde{\mathcal{P}}) \simeq \overset{n-2}{\underset{i=0}{\bigoplus}}y^{i} \mathbb{F}[x]$. 
This completes the proof.
   \end{proof}
   
  The above result obtained for the $2^{th}$ logarithmic Poisson cohomology group is very interesting.
  It parametrizes the obstruction space for deformation quantization.
   Because $H_{log}^{2}(\tilde{\mathcal{P}})\neq 0$, the Poisson bivector $\pi = y^n\partial_{x}\wedge \partial_{y}$, $n>1$ is not rigid; equivalently, this lack of rigidity means that this Poisson bivector admits non-trivial deformations. \\
      
     \begin{proposition}
     The  $k^{th}$  logarithmic Poisson cohomology group of $\tilde{\mathcal{P}}$ along $\mathcal{I}$ is given by
     $H_{log}^{k}(\tilde{\mathcal{P}}) \simeq 0$ for all $k>2$.
      \end{proposition}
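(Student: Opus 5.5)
The plan is to read the vanishing off the shape of the logarithmic cochain complex of Lemma \ref{43}. By the canonical isomorphisms fixed before the cochain complex (\ref{(5)}), we have $\mathfrak{L}_{alt}^{p}(\Omega^{1}_{\mathcal{A}}(log\mathcal{I}), \mathcal{A}) \simeq 0$ for $p>2$. The reason is that $\Omega^{1}_{\mathcal{A}}(log\mathcal{I})$ is a free $\mathcal{A}$-module of rank $2$ with basis $\langle\omega_{1}=dx, \omega_{2}=\frac{dy}{y}\rangle$. An alternating $p$-linear form is then determined by its values on $p$-tuples of basis elements, and for $p\geq 3$ every such tuple repeats a basis element, so the form vanishes. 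Equivalently, $\wedge^{p}Der_{\mathcal{A}}(log \mathcal{I})=0$ for $p\geq 3$, since $Der_{\mathcal{A}}(log \mathcal{I})$ is free of rank $2$ on $\delta^{1},\delta^{2}$.

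First, I would record that in the complex $0\to\mathcal{A}\xrightarrow{d^1}\mathcal{A}\times\mathcal{A}\xrightarrow{d^2}\mathcal{A}\xrightarrow{d^3}0$ all cochain spaces in positions beyond the top degree are zero. Next, I would compute $H_{log}^{k}(\tilde{\mathcal{P}})$ with the indexing of the paper's definition, namely $H^{k}_{log}=Z^{k+1}/B^{k+1}$ with $Z^{k+1}=\ker d^{k+1}$ and $B^{k+1}=\operatorname{Im} d^{k}$. For $k>2$ we have $k+1\geq 4$, so the relevant cocycles are elements of the zero module $\mathfrak{L}^{k}_{alt}$, and $Z^{k+1}=0$. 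Hence $H^{k}_{log}(\tilde{\mathcal{P}})=0/0\simeq 0$. The boundary case $k=3$ has to be handled separately: there $d^{3}=0$ has source $\mathcal{A}$ in degree $2$, but the cocycle module is $\ker d^4$, whose source is the zero module in degree $3$. So the degree-$3$ group vanishes as well.

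The only real obstacle is bookkeeping. The paper's indices are shifted, with $d^{i}$ running from degree $i-1$ to degree $i$, and $H^{i-1}=\ker d^{i}/\operatorname{Im} d^{i-1}$. I would therefore state explicitly which module each $Z$ and $B$ lives in, so that no reader confuses the top-degree group $H^2_{log}$, computed in the previous proposition, with the zero groups. Once the indexing is fixed, the conclusion is immediate from rank $2$ freeness. Finally, I would remark that this matches the classical case of \cite{BI}, where $H^{k}_{Pois}(\mathcal{P})=0$ for $k>2$ for the same reason.
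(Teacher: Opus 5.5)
Your proposal is correct and follows the same route as the paper: the logarithmic cochain modules vanish above degree $2$, so the cocycle and coboundary modules in those degrees are zero. You justify this vanishing by the rank-$2$ freeness of $\Omega^{1}_{\mathcal{A}}(log\mathcal{I})$, where the paper simply takes it as a convention. Your version is also slightly more careful than the paper's line ``$d^k=0$, thus $Z^k=0$'', which holds only because the source $C^{k-1}$ of $d^k$ is itself zero for $k>3$; that is exactly the point you make. Your separate treatment of $k=3$ is harmless but unnecessary, since it is already covered by the general case $k+1\geq 4$.
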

      \begin{proof}
          For all $k>3$ we have $d^k=0$, thus 
         $B^{k}(\textbf{C}^{*},d^{*})= Z^{k}(\textbf{C}^{*},d^{*})=0$. Then we have clearly
           $H_{log}^{k}(\tilde{\mathcal{P}}) \simeq 0$ for all  $k>2$.
      \end{proof}
   The propositions in this section complete the proof of the following $theorem$.
 \begin{theorem} $\label{T3}$\\
 Let $\mathcal{A}=\mathbb{F}[x,y]$ and  $\tilde{\mathcal{P}}=(\mathcal{A};\{x,y\}=y^n, \tilde{H})$ the logarithmic Poisson algebra. The logarithmic Poisson cohomology of a degenerate Poisson bivector $\pi = y^n\partial_{x}\wedge \partial_{y}$ is given by the following:
  \begin{enumerate}
    \item[1.]  $H_{log}^{0}(\tilde{\mathcal{P}}) \simeq \mathbb{F}$,
  \item[2.]  $H_{log}^{1}(\tilde{\mathcal{P}}) \simeq  \mu (\overset{n-2}{\underset{i=0}{\bigoplus}}y^{i} \mathbb{F}[x])\oplus ( \mathbb{F}_{n-1}[y]\times 0)$, where $\mu$ is the  $\mathbb{F}$-linear map  $\mu : \mathcal{A} \longrightarrow \mathcal{A}\times \mathcal{A}$, such that $b \mapsto \left(\int ((n-1)b - y\partial_{y}b)dx; b\right)$,  \item[3.] $H_{log}^{2}(\tilde{\mathcal{P}}) \simeq \overset{n-2}{\underset{i=0}{\bigoplus}}y^{i} \mathbb{F}[x]$,
      \item[4.]  $H_{log}^{k}(\tilde{\mathcal{P}}) \simeq 0$, for all $k>2$. 
  \end{enumerate}
 \end{theorem}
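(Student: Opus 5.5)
The plan is to assemble Theorem \ref{T3} from the four propositions of this section. All cohomology is computed in the model complex of Lemma \ref{43}:
\[
0\to\mathcal{A}\xrightarrow{d^1}\mathcal{A}\times\mathcal{A}\xrightarrow{d^2}\mathcal{A}\to 0 .
\]
This complex is isomorphic to the logarithmic complex $(\ref{(5)})$ through $\partial^0,\partial^1,\partial^2$. Since these isomorphisms commute with the differentials (Lemma \ref{41}), they identify the cocycles and coboundaries degree by degree. Hence $H^{k}_{log}(\tilde{\mathcal{P}})$ may be read off from $\ker d^{k+1}/\operatorname{Im} d^{k}$.

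In degree $0$, the kernel of $d^1$ is computed directly. Since $\mathcal{A}$ is a domain, $d^1(a)=y^{n-1}(y\partial_y a,-\partial_x a)=0$ forces $\partial_x a=\partial_y a=0$, so $a\in\mathbb{F}$ (characteristic zero). There are no coboundaries, so $H^0_{log}\simeq\mathbb{F}$.

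In degree $1$, the first step is to describe $Z^2$. Dividing the cocycle equation by $y^{n-1}$ gives $\partial_x a=(n-1)b-y\partial_y b$. Hence
\[
Z^2=\mu(\mathcal{A})\oplus(\mathbb{F}[y]\times 0),
\]
and it splits along $\mathcal{A}=\bigoplus_{i\le n-2}y^i\mathbb{F}[x]\oplus y^{n-1}\mathbb{F}[x,y]$.

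The second step is to show that $B^2$ is exactly $\mu(y^{n-1}\mathbb{F}[x,y])\oplus(y^n\mathbb{F}[y]\times 0)$. The inclusion $\supseteq$ is the content of Lemma \ref{L6}. For the reverse inclusion, write $d^1(f)=(y^n\partial_y f,-y^{n-1}\partial_x f)$. Adding $(\alpha,0)$ with $\alpha=y^n\partial_y f(0,y)\in y^n\mathbb{F}[y]$ shows that $d^1(f)$ lies in $\mu(-y^{n-1}\partial_x f)+(y^n\mathbb{F}[y]\times 0)$. This uses the normalization of $\int\cdot\,dx$ with zero constant term in $x$.

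The main obstacle is this bookkeeping. One must fix the antiderivative convention so that $\mu$ is a well-defined injective linear map and the sum $\mu(\mathcal{A})+(\mathbb{F}[y]\times0)$ is direct. Once that is done, Lemma \ref{L5} shows that the remaining summands $\mu(\bigoplus_{i\le n-2}y^i\mathbb{F}[x])$ and $\mathbb{F}_{n-1}[y]\times 0$ meet $B^2$ trivially. Therefore they form a complement of $B^2$ in $Z^2$, which gives item 2.

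In degree $2$, every element of $\mathcal{A}$ is a cocycle because $d^3=0$. It remains to show $B^3=y^{n-1}\mathbb{F}[x,y]$. Every $d^2(a,b)$ carries the factor $y^{n-1}$. Conversely, given $y^{n-1}g$, one takes $b=0$ and $a=\int g\,dx$, so that $d^2(a,0)=y^{n-1}g$. Hence $\mathcal{A}/B^3\simeq\bigoplus_{i=0}^{n-2}y^i\mathbb{F}[x]$, which is item 3.

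For $k>2$, the cochain spaces vanish because $\mathfrak{L}^p_{alt}=0$ for $p>2$. Thus $H^k_{log}=0$, which is item 4, and collecting the four degrees proves the theorem.
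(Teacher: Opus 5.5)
Your proposal is correct and follows the paper's route. It uses the same model complex, the same splitting $Z^2=\mu(\mathcal{A})\oplus(\mathbb{F}[y]\times 0)$ refined along $\mathcal{A}=\bigoplus_{i\le n-2}y^i\mathbb{F}[x]\oplus y^{n-1}\mathbb{F}[x,y]$, the inclusions of Lemma \ref{L6}, and $B^3=y^{n-1}\mathcal{A}$ (you take $b=0$ where the paper takes $b=f_{n-1}$).

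Your explicit reverse inclusion via $d^1(f)=\mu(-y^{n-1}\partial_x f)+(y^n\partial_y f(0,y),0)$ is a real tightening. The paper only checks in Lemma \ref{L5} that $B^2$ meets each remaining summand trivially, and that alone does not show their sum is a complement of $B^2$ in $Z^2$.
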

 According to \cite{BI} and this work, we have the following corollary.
\begin{corollary} Let us $\mathcal{A}=\mathbb{F}[x,y]$, $\mathcal{P}=(\mathcal{A};\{x,y\}=y^n, H)$ the classical Poisson algebra,  $\tilde{\mathcal{P}}=(\mathcal{A};\{x,y\}=y^n, \tilde{H})$ the corresponding logarithmic Poisson algebra associated to a degenerate Poisson bivector $\pi = y^n\partial_{x}\wedge \partial_{y}$ and $H_{Pois}^{k}(\mathcal{P})$ the classical Poisson cohomology.
Then we have  $H_{log}^{k}(\tilde{\mathcal{P}}) \simeq H_{Pois}^{k}(\mathcal{P})$ for all $k\geqslant 0$.
 \end{corollary}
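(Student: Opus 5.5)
The plan is to realise the isomorphism $\Pi^{*}$ by an explicit cochain map between the classical complex of \cite{CRPV} for $\varphi=y^n$ and the logarithmic complex of Lemma \ref{43}. I would then identify the induced maps on cohomology degree by degree, using Theorem \ref{T3} on the logarithmic side and the results of \cite{BI} on the classical side.

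\textbf{Step 1 (cochain map).} The natural candidate is the inclusion $Der_{\mathcal{A}}(log\mathcal{I})\subset Der_{\mathcal{A}}$ and its exterior square, written in coordinates as
\[
\iota^{0}=Id_{\mathcal{A}},\qquad \iota^{1}(a^{1},a^{2})=(a^{1},ya^{2}),\qquad \iota^{2}(c)=yc,
\]
since $\delta^{1}\wedge\delta^{2}=y\,\partial_{x}\wedge\partial_{y}$. A direct check against the formulas of Lemma \ref{41} gives commutation with the differentials:
\begin{itemize}
\item in degree one, $y^{n}(\partial_{y}a,-\partial_{x}a)=\iota^{1}\bigl(y^{n-1}(y\partial_{y}a,-\partial_{x}a)\bigr)$;
\item in degree two,
\[
y^{n}\bigl(\partial_{x}a^{1}+\partial_{y}(ya^{2})\bigr)-ny^{n-1}(ya^{2})=y\Bigl(y^{n-1}\bigl(\partial_{x}a^{1}+y\partial_{y}a^{2}\bigr)-(n-1)y^{n-1}a^{2}\Bigr).
\]
\end{itemize}
So $\iota^{*}$ is a morphism of complexes, and it induces maps on cohomology in every degree.

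\textbf{Step 2 (low and high degrees).} In degree $0$ both groups equal the kernel of $d^{1}$ on $\mathcal{A}$, which is $\mathbb{F}$ in both cases, and $\iota^{0}$ is the identity. In degrees $k>2$ both complexes vanish. The work is therefore concentrated in degrees $1$ and $2$.

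\textbf{Step 3 (degrees 1 and 2).} I would compare the explicit representatives. On the logarithmic side, Theorem \ref{T3} gives $\bigoplus_{i=0}^{n-2}y^{i}\mathbb{F}[x]$ in degree $2$ and $\mu(\bigoplus_{i=0}^{n-2}y^{i}\mathbb{F}[x])\oplus(\mathbb{F}_{n-1}[y]\times0)$ in degree $1$. On the classical side I would take the corresponding normal forms from \cite{BI}.

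I expect $\iota^{*}$ itself not to be a quasi-isomorphism. The cokernel complex is $0\to\mathcal{A}/y\mathcal{A}\to\mathcal{A}/y\mathcal{A}\to0$, and its differential is $0$ for $n\geq2$, so the long exact sequence shows that $\iota^{*}$ can shift representatives by powers of $y$. The fallback is therefore to build $\Pi^{*}$ summand by summand instead of via $\iota^{*}$. I would use the bigrading of $\mathcal{A}$ by $(\deg_{x},\deg_{y})$, which both differentials respect up to the fixed shift of $y$-degree by $n-1$, and match the graded pieces $y^{i}x^{j}$ of the two normal forms bijectively. This gives $\mathbb{F}$-linear isomorphisms $H_{Pois}^{k}(\mathcal{P})\simeq H_{log}^{k}(\tilde{\mathcal{P}})$ for $k=1,2$.

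\textbf{Main obstacle.} The hard step is degree $1$. There the logarithmic representatives involve $\mu$ and the extra summand $\mathbb{F}_{n-1}[y]\times0$, whose $y$-weights differ from those of the classical generators in \cite{BI}, and I must make sure the graded pieces really have equal dimensions. I would first try to check this count on each bigraded piece. If it fails, I would fall back on the fact that both degree-one groups are countably-infinite-dimensional $\mathbb{F}$-vector spaces, which yields an (admittedly non-canonical) isomorphism.
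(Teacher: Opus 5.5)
Your argument is correct, and its load-bearing step is the paper's. The paper obtains the corollary simply by setting Theorem \ref{T3} beside the classical computation in \cite{BI}. Your Step 3 does the same thing, either by bigraded matching of normal forms or, as a last resort, by noting that both sides are countably infinite-dimensional in degrees $1$ and $2$.

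The genuinely different ingredient is the cochain map $\iota^{*}$, and there you placed the difficulty in the wrong degree. Complete your long exact sequence. Since $Q^{0}=0$, $\iota^{*}$ is injective on $H^{1}$. The connecting map $\mathcal{A}/y\mathcal{A}=H^{1}(Q)\to H^{2}_{log}(\tilde{\mathcal{P}})=\mathcal{A}/y^{n-1}\mathcal{A}$ sends the class of $g$, lifted to $(0,g)$, to the class of $y^{n-1}\partial_{y}g-ny^{n-2}g$, i.e.\ to $-ny^{n-2}g(x,0)$. This is injective in characteristic zero, so $H^{1}_{Pois}(\mathcal{P})\to H^{1}(Q)$ is zero.

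Hence $\iota^{*}$ is already an isomorphism in degree $1$, with no input from \cite{BI}. Concretely, $\iota^{1}$ sends $(y^{q},0)$ to $y^{q}\partial_{x}$, and sends $\mu(x^{p}y^{q})$ to $\frac{n-1-q}{p+1}x^{p+1}y^{q}\partial_{x}+x^{p}y^{q+1}\partial_{y}$. These are exactly the classical generators, so your worry about mismatched $y$-weights in degree $1$ is unfounded. For the bigraded count, give $\partial_{x}$ and $\delta^{1}$ weight $(-1,0)$, $\partial_{y}$ weight $(0,-1)$ and $\delta^{2}$ weight $(0,0)$. Then both differentials have bidegree $(-1,n-1)$; this is not a pure shift of $y$-degree on coefficient functions.

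The map $\iota^{*}$ fails only in degree $2$. There it is multiplication by $y$ on $\mathcal{A}/y^{n-1}\mathcal{A}$, with kernel $y^{n-2}\mathbb{F}[x]$. No matching is needed in that degree either, because both $d^{2}$'s have image exactly $y^{n-1}\mathcal{A}$. Classically, $y^{n}\partial_{x}f$ gives all of $y^{n}\mathcal{A}$, and $d^{2}(0,g(x))=-ny^{n-1}g(x)$ gives $y^{n-1}\mathbb{F}[x]$.

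So your route, carried through, gives a self-contained proof. It also gives the sharper information that the natural inclusion realises the isomorphism in degrees $0$ and $1$ but not in degree $2$. The paper's comparison, like your countable-dimension fallback, only produces an abstract, non-canonical isomorphism. Such an isomorphism is nearly content-free once both sides are known to be countably infinite-dimensional.
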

 \section{Conclusion}
We have met the  challenge of investigating some properties of the logarithmic cohomological invariants induced by
the Poisson bivector $\pi = y^n\partial_{x}\wedge \partial_{y}$. Previously underexposed, and of performing
explicit computation of logarithmic Poisson cohomology. It comes from this that, the logarithmic Poisson cohomology group $H_{log}^{*}(\tilde{\mathcal{P}})$ is
 agree with it classical Poisson cohomology group $H_{Pois}^{*}(\mathcal{P})$.

\section*{Declarations}
 We declare that the attached manuscript is original, has not been published previously, and is not currently under review by another journal.\\
 
\textbf{ Funding statement:} This research did not receive any specific grant from funding agencies in the public commercial, or not-for-profit sectors.\\

 \textbf{Conflict interest:} The authors have no relevant financial or non-financial interests to disclose.\\
 
\textbf{Author contributions: } The authors contributed to the study's conception and design, performed the research, and wrote the manuscript. All authors read and approved the final manuscript.\\

\textbf{Data availability statement:} The data that support the finding of this study are available in the references of this article via Digital Object Identifiers(DOI). Some references, however such as thesis work and publication from  older journal editions, do not have DOIs and are available from the corresponding author upon reasonable request.\\

\bibliography{sn-bibliography}

\begin{thebibliography}{25}
%
	\bibitem{DJ1} J. Dongho. {\it Logarithmic Poisson Structures: Cohomological Invariants and Pre-Quantification}, University of Angers, 2012
	%
	\bibitem{FB} B. Frederic. {\it Poisson Structures on the polynomial algebras, cohomology and deformations},Claude Roger University - Lyon 1, Lyon, 13-th november 2009
	%
	\bibitem{BI} B. Iskamle, J. Dongho, B. Ndombol. {\it On some properties of Poisson cohomology: Example of calculation on a Poisson structure}, Proceedings  of the American Mathematical Society,(2025) doi	= 10.1090/proc/17177.
	%
	\bibitem{AJ} A. Jeanneret,  D. Lines. {\it Invitation to Algebraic Topology Volume I: Homology}, \'{E}ditions C\'{e}padu\`{e}s~ 1 (2014) 
	%
	\bibitem{CL} C. Laurent-Gengoux, A. Pichereau, P. Vanhaecke. {\it Poisson structure}, Grundlerhen der Mathematischen Wissenschaften, Springer~37 (2013), doi	= 10.1007/9783-642-31090-4
	%
	\bibitem{AL} A. Lichnerowicz, {\it Manifold of Poisson and their associated Lie algebras},Journal of differential geometry, Lehigh University.~12 (1977), no.2, 253-300
	%
	\bibitem{PM} P. Monnier. {\it Poisson cohomology in dimension two}, Israel journal of mathematics, Springer, vol 129,189-207, no. 1, 2002, doi=10.1007/BF02773163
	%
	\bibitem{NN} N. Nakanishi. {\it Poisson cohomology of plane quadratic Poisson structures}. Publications of the research institute for mathematical sciences,  vol 33, n° 1, 73-89  (1997), 	       doi= 10.2977/PRIMS/1195145534
	%
	\bibitem{PA} A. Pichereau. {\it Poisson (Co)homology and isolated singularities in small dimensions, with an application in deformation theory}, Poitiers: theses.fr/2006POIT2354 (2006)
	%
	\bibitem{CRPV} C. Roger and P. Vanhaecke. {\it Poisson cohomologyof the affine plane}, in preparation
	%
	\bibitem{KS} K. Saito. {\it Theory of logarithmic differential forms and logarithmic vector fields}, IJournal of the Faculty of Science, Univiversity of Tokyo Sect 1A, Mathematics. 27 (2024), no.~2, 265-291, 	  doi= 10.15083/00039776
	%
	\bibitem{IV} I. Vaisman. {\it On the geometric quantization of poisson manifolds}, Journal of mathematical physics, vol 32(1991), no.12, 3339-3345, 	  doi= 10.1063/1.529447
	%
	\end{thebibliography}

\end{document}